\date{\today}
\author{Stephen Cantrell}
\address{Department of Mathematics, 
University of Warwick,
Coventry, CV4 7AL, UK}
\email{stephen.cantrell@warwick.ac.uk}
\date{\today. 
\\
2020 \textit{Mathematics Subject Classification}. Primary 51M10; Secondary 37D40. 
\\
\textit{Key words and phrases}.
Marked length spectrum rigidity, negatively curved manifolds}
\title[Sparse spectrally rigid sets for negatively curved manifolds]{Sparse spectrally rigid sets for negatively curved manifolds}
\begin{document}
\maketitle

\begin{abstract}
Suppose that $(M,\mathfrak{g})$ is a compact Riemannian manifold with strictly negative sectional curvatures. A subset of conjugacy classes $E \subset \conj(\pi_1(M))$ is called spectrally rigid if when two negatively curved Riemannian metrics $\mathfrak{g}_1, \mathfrak{g}_2$ on $M$ have the same marked length spectrum on $E$, then their marked length spectra coincide everywhere. In this work we show that there are arbitrarily sparse spectrally rigid sets and that they exist, in some sense, in every direction in $\pi_1(M)$.
\end{abstract}

\section{Introduction}
Let $(M,\mathfrak{g})$ be a compact Riemannian manifold (possibly with totally geodesic boundary) with strictly negative sectional curvatures.
There is a bijection between the closed geodesics on $(M, \mathfrak{g})$ and the conjugacy classes $\conj(\G)$ in the fundamental group $\G = \pi_1(M)$. Through this bijection we can associate a length to each conjugacy class. That is, there is a function $\ell_\mathfrak{g} : \conj(\G) \to \R_{> 0}$ that sends each conjugacy class $[x] \in \conj(\G)$ to the length of the corresponding closed geodesic $\ell_\mathfrak{g}[x]$. The function $\ell_\mathfrak{g}$ is referred to as the marked length spectrum of $(M,\mathfrak{g})$. The well known marked length spectrum rigidity conjecture predicts that the isometry class of $\mathfrak{g}$ is determined by its marked length spectrum. That is, if two negatively curved Riemannian metrics $\mathfrak{g}_1, \mathfrak{g}_2$ on $M$ have the same marked length spectrum ($\ell_{\mathfrak{g}_1} [x]= \ell_{\mathfrak{g}_2}[x]$ for all $[x] \in \conj(\G)$) then $(M,\mathfrak{g}_1)$ and $(M,\mathfrak{g}_2)$ are isometric. This conjecture was resolved in dimension $2$ by Otal \cite{otal} and independently by Croke \cite{croke}. In higher dimensions, the conjecture was verified by Hamenst\"adt  when one of the metrics is locally symmetric \cite{hamenstadt}. Recently there has been much interest in the conjecture and related problems with progress made in various directions \cite{croke.dai}, \cite{rafi.duch},  \cite{bridgeman.canary}, \cite{g.l}, \cite{g.k.f}, \cite{yanlong}, \cite{gogolev.hertz}, \cite{cantrell.reyes.lsr},  \cite{Butt}.

It is natural to try to identify subsets of conjugacy classes which determine the marked length spectrum of a negatively curved metric $\mathfrak{g}$. We say that a collection of conjugacy classes $E \subset \conj(\G)$ is spectrally rigid if when $\ell_{\mathfrak{g}_1}[x] = \ell_{\mathfrak{g}_2}[x]$ for $[x] \in E$ then we must have that $\ell_{\mathfrak{g}_1}[x] = \ell_{\mathfrak{g}_2}[x]$ for all  $[x] \in \conj(\G)$. Examples of spectrally rigid sets include:
\begin{enumerate}
\item conjugacy classes with a representative in a normal subgroup $H < \pi_1(M)$ \cite{bonahonbook} or more generally a subgroup that has full limit set  in the universal cover \cite{yanlong};
\item conjugacy classes corresponding to a fixed homology class \cite{gogolev.hertz}; and,
\item subsets of conjugacy classes that have complement with sub-exponential growth \cite{noelle}, have positive upper density \cite{dilsavor.reber} or more generally have full exponential growth rate for all Riemannian metrics on $M$ \cite{cantrell.reyes.lsr}.
\end{enumerate} 
Note also that if we restrict to hyperbolic metrics on compact surfaces there are finite spectrally rigid sets coming from a pants decomposition, i.e. its Fenchel Nielsen coordinates. This is not the case in variable negative curvature (as we can perturb negatively curved metrics away from the union of any finite number of closed geodesics). 


If $\mathfrak{g}$ is a negatively curved Riemannian metric on $M$ and $\alpha \subset \conj(\pi_1(M))$ is the collection of conjugacy classes lying in a fixed homology class then $\#\{ [x] \in \alpha : \ell_\mathfrak{g}[x] <T\}$ grows exponentially as $T\to\infty$ and in fact has the same exponential growth rate as  $\#\{[x] \in \conj(\pi_1(M)) : \ell_\mathfrak{g}[x] <T \}$ \cite{adachi.sunada}. The spectrally rigid sets listed in $(1), (2)$ and $(3)$ above are all large in this sense: they have strictly positive exponential growth rate, i.e. for each set $E$ mentioned in $(1), (2)$ and $(3)$  above
\begin{equation}\label{eq.gr}
\#\{[x] \in E : \ell_\mathfrak{g}[x] < T\} 
\end{equation}
grows exponentially quickly as $T\to\infty$. They are also large in the sense that they have full limit sets in the universal covers of the manifolds. This may lead us to suspect that spectrally rigid sets are necessarily large and it is therefore natural to search for small  spectrally rigid sets. This problem is given more motivation by the fact that there exist large non-spectrally rigid sets. Indeed, suppose that $(V,\mathfrak{g})$ is a negatively curved, closed Riemannian surface. Then the number of closed geodesics on $(V,\mathfrak{g})$ of length less than $T$ has exponential growth rate $h_\mathfrak{g} >0$ where $h_\mathfrak{g}$ is the topological entropy of the geodesic flow on $(V,\mathfrak{g})$. For any $\alpha < h_\mathfrak{g}$ there is a non-spectrally rigid set $E$ with exponential growth rate 
\[
v_E = \limsup_{T\to\infty} \frac{1}{T} \log \#\{ [x] \in E : \ell_{\mathfrak{g}}[x] < T  \}
\]
 such that $\alpha \le   v_E <  h_\mathfrak{g}$. To construct such a set, fix a ball of radius $\epsilon > 0$ in $(S,\mathfrak{g})$ and let $E$ be the set of conjugacy classes corresponding to closed geodesics that do not enter this ball. The set $E$ is not spectrally rigid (as we can perturb the metric $\mathfrak{g}$ in this ball to obtain a new metric that is not isometric to $\mathfrak{g}$) and as $\epsilon$ tends to $0$ the exponential growth rate of $E$ approaches $h_\mathfrak{g}$. It is likely a difficult question to ascertain whether there is a non-spectrally rigid set $E$ such that $\#\{ [x] \in E : \ell_{\mathfrak{g}}[x] < T \}$  has exponential growth rate $h_\mathfrak{g}$. We do not address this question in the current work but it would be interesting to investigate this problem further.

The author is aware of only two results for spectrally rigid sets with sub-exponential growth rates. However, these results are in different settings to that of negatively curved manifolds considered above. The first is a result of Bridgeman and Canary \cite{bridgeman.canary} that states that for Kleinien surfaces groups, the collection of simple closed geodesics form a spectrally rigid set. Note that simple closed geodesics on negatively curved surfaces do not form a spectrally rigid set as their union is not dense. The second is a rigidity result of Kapovich for the  Culler-Vogtmann Outer Space of free groups $F_N$ with rank $N \ge 2$ \cite{kap}. Kapovich showed that typical geodesic rays (in some sense) in $F_N$ form spectrally rigid sets for Outer Space. There are many other works that study rigidity sets for Outer Space \cite{smillie.vogtmann}, \cite{osspec1}, \cite{osspec3}, \cite{osspec2}. In both the results of Bridgeman-Canary and Kapovich the spectrally rigid sets have polynomial growth (i.e. the analogues of $(\ref{eq.gr})$ grow polynomially in $T$). 

In this work we generalise and improve Kapovich's result \cite{kap}. In the setting of negatively curved manifolds, we show that, surprisingly, arbitrarily small spectrally rigid sets exist in all directions in $\pi_1(M)$. 

Before we state this result we recall the following facts.
Let $(M,\mathfrak{g})$ be a compact Riemannian manifold with strictly negative sectional curvatures. The fundamental group $\pi_1(M)= :\G$ of $M$ is a hyperbolic group and the metric $\mathfrak{g}$ lifts to a metric $d_\mathfrak{g}$ on $\G$. Indeed, we consider the universal cover $(\widetilde{M},\widetilde{d_{\mathfrak{g}}})$ of $M$, fix a base point $ p \in \widetilde{M}$ and define
\[
d_\mathfrak{g}(x,y) = \widetilde{d_{\mathfrak{g}}}(x \cdot p, y\cdot p) \ \text{ for $x,y \in \G$.}
\]
Then the marked length spectrum of $\mathfrak{g}$ can be obtained as
\[
\ell_\mathfrak{g}[x] = \lim_{n\to\infty} \frac{d_\mathfrak{g}(o,x^n)}{n} \ \text{ for each $[x] \in \conj(\G)$}
\]
where $o \in \G$ is the identity element.
The metric $d_\mathfrak{g}$ on $\G$ may not be geodesic but roughly geodesic (see Section \ref{sec.prelim}).
Recall that for  $D \ge 0$ a $D$-rough geodesic for $d_\mathfrak{g}$ is a (possibly infinite) sequence of group elements $x_n \in \G$ such that 
\[
|m-n| - D \le d_\mathfrak{g}(x_n,x_m) \le  |m-n| + D \ \text{ for all $m,n \ge 1$.}
\]

There are Patterson-Sullivan measures associated to $d_\mathfrak{g}$ on the Gromov boundary $\partial \G$ of $\G$ (see Section \ref{sec.hyp}) and we label one of these measures $\nu_\mathfrak{g}$. A useful example to keep in mind is that when $(M,\mathfrak{g})$ is a closed, $n$ dimensional hyperbolic manifold. In this case the Gromov boundary of $\G$ is homeomorphic to the $n-1$ dimensional sphere and the Patterson-Sullivan measure $\nu_\mathfrak{g}$ is in the same class as the Lebesgue measure.

Our main result is as follows.

\begin{theorem}\label{thm.ae}
Let $(M,\mathfrak{g})$ be a compact Riemannian manifold with strictly negative sectional curvatures. Let $d_\mathfrak{g}$ be the metric introduced above with Patterson-Sullivan measure $\nu_\mathfrak{g}$. Then there exists $D >0$ such that for $\nu_\mathfrak{g}$ almost every $\xi \in \partial \G$ there exists a $D$-rough geodesic ray $(\xi_k)_{k=0}^\infty$ for $d_\mathfrak{g}$ starting at the identity in $\G$ ($\x_0 = o \in \G$) and with end point $\xi$ such that  for any $M\ge 1$ the set $
\{[\xi_k]: k\ge M\}$ is spectrally rigid.

Furthermore, for any function $f: \R_{>0} \to \R_{>0}$ with $f(T) \to \infty$ as $T\to\infty$ there exists a subsequence $n_k$ such that
$E= \{ [\xi_{n_k}] : k\ge 1\}$
is spectrally rigid and
\[
\#\{ [x] \in E :  \ell_\mathfrak{g}[x] < T  \} \le f(T) 
\]
for all $T >0$.
\end{theorem}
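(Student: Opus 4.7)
\emph{Proof plan.} The strategy decomposes into three steps: a sufficient condition for spectral rigidity phrased in terms of density of axis endpoints on the Gromov boundary, verification of this condition for typical rough-geodesic rays via ergodic theory of $\nu_\mathfrak{g}$, and a diagonal construction for sparsification.

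First, I would establish the following sufficient condition: if $E \subset \conj(\G)$ has the property that the set of axis pairs $\{(x_-, x_+) : [x] \in E\}$ is dense in $\partial^2\G := (\partial\G \times \partial\G) \setminus \Delta$ (where $x_\pm$ are the repelling and attracting fixed points of a representative $x$ on the Gromov boundary), then $E$ is spectrally rigid. The argument rests on the fact that $\ell_\mathfrak{g}$ encodes a H\"older continuous cross-ratio on $\partial^2\G$ (in the spirit of Bonahon, Otal, Hamenst\"adt). Equality of the cross-ratios for $\mathfrak{g}_1$ and $\mathfrak{g}_2$ on the dense set of axis pairs extends by continuity to all of $\partial^2\G$, and since the axis pairs of \emph{all} conjugacy classes are standardly dense in $\partial^2\G$, this yields $\ell_{\mathfrak{g}_1} = \ell_{\mathfrak{g}_2}$ on $\conj(\G)$.

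Next, using Section \ref{sec.prelim}, for each $\xi \in \partial\G$ fix a $D$-rough geodesic ray $(\xi_k)$ with endpoint $\xi$, with $D$ uniform in $\xi$. The key technical result is a recurrence lemma: for $\nu_\mathfrak{g}$-a.e.\ $\xi$ the set $\{((\xi_k)_-, (\xi_k)_+) : k \ge 1\}$ meets every open subset of $\partial^2\G$ infinitely often. The attracting fixed point $(\xi_k)_+$ is pinned close to $\xi$ since high powers of $\xi_k$ track the ray toward $\xi$; the delicate issue is the repelling fixed point $(\xi_k)_-$, which up to bounded Gromov-boundary error coincides with $\xi_k^{-1}\cdot\xi$. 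Ergodicity of the $\G$-action on $(\partial\G,\nu_\mathfrak{g})$ (standard for Patterson--Sullivan measures on hyperbolic groups), combined with a Borel--Cantelli-type argument along the sequence $\xi_k^{-1}$, should yield that $\xi_k^{-1}\cdot\xi$ visits every open subset of $\partial\G$ infinitely often for a.e.\ $\xi$. Since the recurrence property is stable under discarding any finite prefix, the first conclusion of the theorem follows by combining this with the rigidity criterion of Step 1, for every $M \ge 1$.

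For the sparsification, fix any $f$ with $f(T)\to\infty$ and enumerate a countable basis $\{U_j\}$ of $\partial^2\G$. Construct $n_k$ inductively in a round-robin fashion so that $((\xi_{n_k})_-, (\xi_{n_k})_+) \in U_{j(k)}$ (possible by the recurrence) while also requiring $n_k$ to grow quickly enough that $\#\{k : \ell_\mathfrak{g}[\xi_{n_k}] < T\} \le f(T)$ for all $T$. Since along a typical ray $\ell_\mathfrak{g}[\xi_k]\to\infty$ (hyperbolic elements of $\G$ have uniformly positive translation length, and generically $\ell_\mathfrak{g}[\xi_k]$ grows linearly along a rough geodesic), any desired growth ceiling can be achieved by spacing out the $n_k$ inside each basis element. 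The resulting $E$ still has axis pairs dense in $\partial^2\G$ and is therefore spectrally rigid. The main obstacle is the recurrence lemma in Step~2; Step~1 is a repackaging of cross-ratio rigidity and Step~3 is a routine diagonal argument once recurrence is in hand.
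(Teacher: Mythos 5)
There is a genuine gap, and it sits at the heart of your plan: Step 1, the claim that density of axis pairs in $\partial^2\G$ suffices for spectral rigidity, is not established and your proposed justification does not work. The marked length spectrum restricted to a set $E$ is not the restriction of any continuous function on $\partial^2\G$: two hyperbolic elements can have axes with nearly identical endpoint pairs and wildly different translation lengths, so "equality on a dense set of axis pairs extends by continuity" has no meaning for $\ell_{\mathfrak{g}_1},\ell_{\mathfrak{g}_2}$ themselves. The correct relationship runs through the cross-ratio on \emph{quadruples}: the full length spectrum determines the cross-ratio, but recovering the cross-ratio at a quadruple approximated by two axes requires lengths of auxiliary elements (products such as $x^n y^m$) that need not lie in $E$. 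If mere density of axes were sufficient, one could build sparse rigid sets by picking one axis per element of a countable basis, making Corollary \ref{thm.as} trivial; the paper instead needs the much stronger combinatorial input of Proposition \ref{prop.loops} (every conjugacy class has a bounded power represented by a loop in a word-maximal component of the Cannon coding), the fact that a typical ray contains that \emph{exact} loop as a subpath infinitely often, and the Gromov-product/translation-length estimates (\ref{eq.gp1}), (\ref{eq.tlgp}), which recover $\ell_j[x^M]$ up to a uniform additive error from two lengths $\ell_j[g_{N_1}],\ell_j[g_{N_2}]$ along the ray; the uniform error is then killed by homogeneity of translation length. Nothing in your Step 1 replaces this mechanism.

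Step 2 also has a problem as stated. You note yourself that $(\xi_k)_+$ is pinned near $\xi$, but then claim the pairs $((\xi_k)_-,(\xi_k)_+)$ meet every open subset of $\partial^2\G$ infinitely often; these are incompatible, since the second coordinates accumulate only at $\xi$. To get density you would have to pass to the full $\G$-orbits of the axes (equivalently, density of the union of the closed geodesics of the $[\xi_k]$ in the space of geodesics), which is a different statement and would need an equidistribution/closing-lemma argument tied to the identification of $\nu_\mathfrak{g}$ with the Bowen--Margulis measure, not the Borel--Cantelli argument you sketch. By contrast, the paper's measure-theoretic step is purely symbolic: it builds a measure $\widehat\nu$ on the Cannon coding projecting to $\nu_\mathfrak{g}$ (Lemma \ref{lem.wsc}), compares it with Gibbs measures on the word-maximal components (Lemmas \ref{lem.ac}, \ref{lem.meascomp}, Corollary \ref{cor.measure}), and invokes Poincar\'e recurrence to see every cylinder infinitely often. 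So both the rigidity criterion and the recurrence input of your plan would need to be replaced before the sparsification step (which is fine in itself) could be run.
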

\begin{remark}
In fact the sequence $\x_k$ constructed in the above theorem will satisfy $|\ell_\mathfrak{g}[\x_k] - d_\mathfrak{g}(o,\x_k)| \le R \ \text{for all} \ k \ge 1$ and so $\ell_{\mathfrak{g}}[\x_k]$ is diverging linearly as $k$ increases.\\
\end{remark}

This result shows that there are arbitrarily sparse spectrally rigid sets that exist in $\nu_\mathfrak{g}$ almost ever every direction in $\G$. 
We immediately obtain the following corollary.
\begin{corollary}\label{thm.as}
Let $(M,\mathfrak{g})$ be a compact Riemannian manifold with strictly negative sectional curvatures. Then for any function $f:\R_{>0} \to \R_{> 0}$ with $f(T) \to \infty$ as $T\to\infty$ there exists a set $E \subset \conj(\G)$ (where $\G = \pi_1(M)$) such that
\[
\#\{ [x] \in E : \ell_\mathfrak{g}[x] < T\} \le f(T) \ \text{ for all } \ T > 0
\]
and $E$ is spectrally rigid. 
\end{corollary}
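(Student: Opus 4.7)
The plan is to deduce Corollary \ref{thm.as} directly from the ``furthermore'' clause of Theorem \ref{thm.ae}. The Patterson-Sullivan measure $\nu_\mathfrak{g}$ is a nonzero finite measure on $\partial\G$, so the set of $\xi \in \partial\G$ for which the conclusion of Theorem \ref{thm.ae} holds has full $\nu_\mathfrak{g}$-measure and in particular is nonempty. The first step is therefore to fix any one such $\xi \in \partial\G$ together with the associated $D$-rough geodesic ray $(\xi_k)_{k=0}^\infty$ starting at the identity $o \in \G$ and converging to $\xi$.

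The second step is to feed the given function $f$ into the theorem. Since $f:\R_{>0}\to\R_{>0}$ satisfies $f(T)\to\infty$ as $T\to\infty$, the ``furthermore'' part of Theorem \ref{thm.ae} applied to the ray $(\xi_k)$ and to this function $f$ produces a subsequence $(n_k)_{k\ge 1}$ such that the set
\[
E = \{ [\xi_{n_k}] : k \ge 1 \} \subset \conj(\G)
\]
is spectrally rigid and satisfies $\#\{ [x]\in E : \ell_\mathfrak{g}[x] < T \} \le f(T)$ for every $T>0$. This is exactly the statement of Corollary \ref{thm.as}.

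There is no genuine obstacle here beyond invoking Theorem \ref{thm.ae}: all the difficulty — constructing the rough geodesic ray, selecting a subsequence sparse enough to obey the growth bound, and verifying that the resulting sparse set remains spectrally rigid — is carried by the theorem itself. The only observation needed in addition is that $\nu_\mathfrak{g}$ is not the zero measure, which upgrades the $\nu_\mathfrak{g}$-almost-everywhere statement to a nonemptiness statement and hence to the existence of the desired set $E$.
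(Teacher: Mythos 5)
Your proposal is correct and is exactly the argument the paper intends: the corollary is stated as an immediate consequence of Theorem \ref{thm.ae}, obtained by picking any single $\xi$ in the full $\nu_\mathfrak{g}$-measure set (nonempty since $\nu_\mathfrak{g}$ is a nonzero measure) and invoking the ``furthermore'' clause with the given $f$. Nothing further is needed.
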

This shows that although there are no finite spectrally rigid sets, there are  arbitrarily sparse (with respect to $\mathfrak{g}$), infinite, spectrally rigid sets. \\

In the final section we prove an analogue of Theorem \ref{thm.ae} in which $d_\mathfrak{g}$ is replaced by a word metric.
This provides a refinement (in multiple ways, see Theorem \ref{thm.os}) of Kapovich's aforementioned result \cite{kap}. In particular it extends Kapovich's result to small actions of free groups and surface groups on $\R$-trees. We also prove a rigidity result for Hitchin representations, Theorem \ref{thm.hitchin}. In fact our methods are sufficiently flexible that they apply to a large class of metrics (including non-proper metrics) on general hyperbolic groups, see Theorem \ref{thm.general}. Before describing the format of the article we explain the methods and ideas behind the proof of Theorem \ref{thm.ae} in more detail.

\subsection{Ideas behind the proof}
There are two main ingredients in the proof of Theorem \ref{thm.ae}. The first is a dynamical description of the Patterson-Sullivan measure $\nu_\mathfrak{g}$. More precise we want to find a correspondence between  $\nu_\mathfrak{g}$ on $\partial \G$ and a measure which lives on a dynamical system which codes our hyperbolic group $\G$. This dynamical system, which is called a Cannon coding, is a subshift of finite type coming from a strongly Markov structure: a combinatorial object that represents the group $\G$ equipped with a finite generating set $S$ (see Section \ref{sec.cc}). In Section \ref{sec.ps} we show how to construct Patterson-Sullivan measures in a way that is compatible with the Cannon coding. This construction allows us to find a measure $\widehat{\nu}$ on the Cannon coding that represents $\nu_\mathfrak{g}$ on $\partial \G$.  We prove that this measure $\widehat{\nu}$ can be expressed in terms of certain Gibbs measures (Lemma \ref{lem.ac}): measures that are well understood from the perspective of ergodic theory. This allows us to form a dictionary between the geometric properties of the Patterson-Sullivan measure $\nu_\mathfrak{g}$ and the dynamical properties of $\widehat{\nu}$. This idea was inspired by the work of Calegari and Fujiwara.  In \cite{CalegariFujiwara2010} these authors studied the correspondence between dynamical measures on the Cannon coding and Patterson-Sullivan measures for \textit{word metrics}. The Cannon coding, as mentioned above, encodes $\G$ equipped with a word metric $S$. In particular the points in the Cannon coding represent infinite geodesic rays in the $S$ word metric.  The fact that the coding reflects the $S$ word metric  means that the Cannon coding is well suited for studying the Patterson-Sullivan measure  $\nu_S$  for a word metric. Calegari and Fujiwara use this to prove that there is a strong link between  $\nu_S$  and a dynamical measure called a measure of maximal entropy. In particular there is an explicit way of expressing $\nu_S$ as the push-forward of a measure of maximal entropy on the coding: see Proposition \ref{prop.mme} below.
In this work, we want to study the metrics $d_\mathfrak{g}$ opposed to word metrics. The Cannon coding is not as well suited for this purpose and significant technical difficulties appear that are not present when studying word metrics. 
The author is aware of only a couple of previous works that consider the links between Patterson-Sullivan measures for non-word metrics and dynamical measures on the Cannon coding. These works are by Gou\"{e}zel \cite{GouezelLocalLimit} and Tanaka \cite{THaus}. To prove Theorem \ref{thm.ae} we need to prove stronger comparison results between dynamical and geometrical measures than those shown in \cite{GouezelLocalLimit}, \cite{THaus}. To do so we appeal to \cite{cantrell.new} and \cite{cantrell.tanaka.2}. These works provide various structural results (see Proposition \ref{prop.mc} and Proposition \ref{prop.loops}) for the Cannon coding that allow us to form a correspondence between $\nu_\mathfrak{g}$ and Gibbs measures as mentioned above. The link between the Gibbs measures and $\nu_\mathfrak{g}$ is not as strong as the link between the measure of maximal entropy and $\nu_S$ (shown by Calegari and Fujiwara) and we do not have a direct analogue of Proposition \ref{prop.mme}. We are however able to prove a weaker version of Calegari and Fujiwara's result which is strong enough for our purposes (Lemma \ref{lem.ac}). Once we have constructed the dynamical measure $\widehat{\nu}$ representing $\nu_\mathfrak{g}$ and have understood its properties (see Section \ref{sec.ps}) we use results from dynamics to prove that a $\widehat{\nu}$ typical path in the coding sees (in some sense) all the finite paths in the coding. This is a dynamical analogue of a fact that Kapovich relies on to prove his result for sparse spectrally rigid sets in Outer Space \cite{kap}. Indeed, the proof of the main result  in \cite{kap} hinges on the fact that a (hitting measure) typical infinite path coming from a simple, non-backtracking random walk on a free group typically sees every group element as a subpath.
 
The second ingredient in the proof is a way of translating the recurrence properties of $\widehat{\nu}$ to the geometric setting to show that $\nu_\mathfrak{g}$ typical rough geodesics in $\G$ see all the conjugacy classes. We also need to do this in such a way that the resulting rough geodesics are spectrally rigid. This step occurs in Section \ref{sec.set} as Proposition \ref{prop.ssr}. To do so we use ideas from geometric group theory. This part of the argument still relies on understanding the Cannon coding however; we need to translate properties of points in the Cannon coding to properties of $d_\mathfrak{g}$ rough geodesics and to do so we need to show that the points in the coding represent particular nice geodesic representations for points in $\partial \G$. This second ingredient is different from Kapovich's approach in \cite{kap} which relies on geodesic currents. This is one of the key differences that allows us to generalise Kapovich's result.

\subsection{Organisation of the paper}\label{sec.organ}
In Section \ref{sec.prelim} we introduce background material on hyperbolic groups and symbolic dynamics. In the following section, Section \ref{sec.proof}, we prove Theorem \ref{thm.ae}. We break the proof into $3$ steps:
\begin{enumerate}
\item In Section \ref{sec.set},  we construct the sparse spectrally rigid sets that appear in the main theorem (see Definition \ref{def.v} and Proposition \ref{prop.ssr}). That is, we construct a set of $d_\mathfrak{g}$ rough geodesics and prove that it has the sparse spectrally rigid property in Theorem \ref{thm.ae}. We are then left to prove that this set has full $\nu_\mathfrak{g}$ measure.
\item In Section \ref{sec.ps} we present a dynamical construction of $\nu_\mathfrak{g}$ that we will use to prove that the set from Step $(1)$ has full measure. After constructing this dynamical measure we prove that it has various nice properties that we later rely on. 
\item We conclude the proof of Theorem \ref{thm.ae} in Section \ref{sec.conclude} using the results from the previous two steps.
\end{enumerate}
After proving Theorem \ref{thm.ae}, in the final section of the article we discuss generalisations of our main result as discussed above.

\subsection*{Acknowledgements}
The author is grateful to Eduardo Reyes, Richard Sharp, Ilya Kapovich and Andrey Gogolev for useful comments, discussions and remarks. The author is thankful to the referee for their thorough feedback and careful reading of this article; the author feels that this work has be much improved thanks to their comments.

 \section{Preliminaries}\label{sec.prelim}

\subsection{Hyperbolic groups and metrics}\label{sec.hyp}

We  will assume that the reader is familiar with hyperbolic groups and metrics and only briefly introduce them here. See \cite{GhysdelaHarpe} and \cite{cantrell.tanaka.1} for more detailed accounts.

Let $(X, d)$ be a metric space. The \textit{Gromov product} associated to $d$ is defined by
\[
 (x,y)_{w,d} =\frac{1}{2}( d(w, x)+d(w, y)-d(x, y)) \ \ \text{for $x, y, w \in X$}
\]
and we say that $(X, d)$ is $\d$-hyperbolic for some $\delta \ge 0$ if
\[
(x,y)_{w,d} \ge \min\left\{(x,z)_{w,d}, (y,z)_{w,d}\right\}-\d \quad \text{for all $x, y, z, w \in X$}.
\]
A metric space is called \textit{hyperbolic} if it is $\delta$-hyperbolic for some $\delta \ge 0$.

Let $\G$ be a finitely generated group. For a finite generating set $S$ the corresponding word metric $d_S$ is defined as follows. For $x, y \in \G$
\[
d_S(x,y) = \min\{k \ge 0: x^{-1}y = s_1\cdots s_k, \text{ for some } s_1, \ldots, s_k \in S \cup S^{-1} \}.
\]
Throughout we will use $o \in \G$ to denote the identity element in $\G$. We will also write $|x|_S$ for $d_S(o,x)$.
A hyperbolic group is a finitely generated group $\G$ that, when equipped with a word metric $d_S$ associated to a finite generating set $S$,  becomes a hyperbolic metric space. 
In this work all hyperbolic groups will be assumed to be non-elementary: we assume that they do not contain a cyclic subgroup with finite index.
We will use the notation $\Cay(\G,S)$ to denote the Cayley graph of $\G$ with respect to $S$.	

We say that two metrics $d,d_\ast$ on $\G$ are {\it quasi-isometric} if there exist constants $L > 1$ and $C \ge 0$ such that
\[
L^{-1} \, d(x, y)-C \le d_\ast(x, y) \le L \, d(x, y)+C \ \text{ for all $x, y \in \G$}.
\]
If two metrics $d,d_\ast$ have the property that there exists $\tau, C >0$ such that $|\tau d(o,x) - d_\ast(o,x)| < C$ for all $x \in \G$ (i.e. after scaling they agree up to a uniformly bounded error) then we say that $d,d_\ast$ are \textit{roughly similar}.
Throughout this work,  $\Dc_\G$ will denote the set of metrics on $\G$ which are left-invariant, hyperbolic and quasi-isometric to some (equivalently, any) word metric in $\G$. This class contains many interesting examples of metrics, see \cite{cantrell.tanaka.1}. Pairs of metrics $d,d_\ast$ in $\Dc_\G$ have  coarsely comparable Gromov products:
there exist $\lambda >1, C>0$ such that
\begin{equation}\label{eq.gp1}
\lambda^{-1}(x,y)_{o,d} - C \le (x,y)_{o,d_\ast} \le \lambda(x,y)_{o,d} + C
\end{equation}
for all $x,y \in \G$ \cite[Prop. 15 (i), Ch. 5]{GhysdelaHarpe}. For $d\in \Dc_\G$ we write $\ell_d$ for the translation distance function for $d$, i.e. $\ell_d[x] = \lim_{n\to\infty} d(o,x^n)/n$ where $[x]$ is the conjugacy class containing $x$. To simplify notation, if $S$ is a finite generating set for $\G$ we will write $\ell_S$ for the translation length function for $d_S$. 
For each $d \in \Dc_\G$ there exists a constant $C >0$ such that if $x \in \G$ satisfies $d(o,x) - 2(x, x^{-1})_{o,d} \ge C$ then
\begin{equation}\label{eq.tlgp}
| \ell_d[x]  - (d(o,x) - 2(x, x^{-1})_{o,d}| \le C,
\end{equation}
see \cite[Lemma 3.2]{cantrell.tanaka.1}. This will be a crucial property that we use multiple times in the proof of Theorem \ref{thm.ae}.

Fix a metric $d$ in $\Dc_\G$.
Given an interval $I \subset \R$ and constants $L, D >0$ we say that a map $\g: I \to \G$ is an \textit{$(L, D)$-quasi-geodesic} if 
\[
L^{-1} \, |s-t|-D \le d(\gamma(s), \gamma(t)) \le L\, |s-t|+D \  \text{ for all $s, t \in I$},
\]
and a \textit{$D$-rough geodesic} if
\[
|s-t|-D \le d(\gamma(s), \gamma(t))\le |s-t|+D \ \text{ for all $s, t \in I$}.
\]
Geodesics are $0$-rough geodesics.
A metric space $(\G, d)$ is called $D$-{\it roughly geodesic} if for each pair $x, y \in \G$ we can find a $D$-rough geodesic joining $x$ to $y$. We say that $(\G,d)$ is \textit{roughly geodesic} if it is $D$-roughly geodesic for some $D \ge 0$.
A geodesic metric space is a $0$-roughly geodesic metric space. Every metric in $\Dc_\G$ is roughly geodesic \cite{BonkSchramm}.
Moreover the Morse Lemma holds for such metrics: if  $d \in\Dc_\G$ is $D_0$-roughly geodesic then for every $(L, D)$-quasi-geodesic $\g$ in $(\G, d)$ there is a $D_0$-rough geodesic $\g_0$ such that $\g$ and $\g_0$ are within Hausdorff distance $K$ (depending on $L, D, D_0, \d$ where $\d$ is a hyperbolicity constant for $d$).

The main case that we consider in this work is the following example.
\begin{example}\label{ex.manifold}
Let $(M,\mathfrak{g})$ be a compact Riemannian manifold (possibly with totally geodesic boundary) with strictly negative sectional curvatures. 
The fundamental group $\pi_1(M) = :\G$ of $M$ is a hyperbolic group. Further, $\G$ acts on the universal cover $(\widetilde{M}, \widetilde{d}_\mathfrak{g})$ of $(M,\mathfrak{g})$ and through this action the metric $\mathfrak{g}$ lifts to a metric $d_\mathfrak{g}$:
\[
\text{ for $x,y \in \G$ } d_\mathfrak{g}(x,y) = \widetilde{d}_\mathfrak{g}(x \cdot p, y \cdot p) \ \text{ where $p \in \widetilde{M}$ is a fixed basepoint.}
\]
 This metric is quasi-isometric to a (equivalently every) word metric $d_S$ on $\G$ by the \v{S}varc-Milnor Lemma. Further it is left invariant and hyperbolic and so belongs to $\Dc_\G$.
\end{example}

Hyperbolic groups can be compactified using their ideal boundary $\partial \G$ which consists of equivalence classes
of divergent sequences. Fix a reference metric $d \in \Dc_\G$. A sequence of group elements $\{x_n\}_{n=0}^\infty$ diverges
if $(x_n,x_m)_{o,d}$ diverges as $\min\{n, m\}$ tends to infinity. Two divergent sequences $\{x_n\}_{n=0}^\infty$ and $\{y_n\}_{n=0}^\infty$ are {\it equivalent} if 
$(x_n,y_m)_{o,d}$ diverges as $\min\{n, m\}$ tends to infinity. 
If $d \in \Dc_\G$ is $D$-roughly geodesic then for each $\xi$ in $\partial \G$ there exists a $D$-rough geodesic beginning at $o$ and ending at $\xi$.
	
The Gromov product extends to points in $\G \cup \partial \G$ for any $d\in\Dc_\G$. Fix $d\in\Dc_\G$ and
let
\[
(\xi,\eta)_{o,d} = \sup\left\{\liminf_{n \to \infty}(x_n,y_n)_{o,d} \ : \ \x=\{x_n\}_{n=0}^\infty, \  \y=\{y_n\}_{n=0}^\infty\right\},
\]
for $\x, \y \in \G \cup \partial \G$,
where if $\x$ or $\y$ is in $\G$, then we take $\{x_n\}_{n=0}^\infty$ as the constant sequence $x_n=\x$ for all $n\ge 0$.
The Busemann function $ \beta_w(x, \x)$ associated to $d$ based at $w$ is given by
\[
\beta_w(x,\x) = \sup\left\{ \limsup_{n\to\infty} d(x,\x_n) - d(w,\xi_n) : \{ \x_n\}_{n=0}^\infty = \x \right\}.
\]
Following the Patterson-Sullivan construction one can obtain the following result (see for example Proposition 2.7 in \cite{cantrell.tanaka.1}). In the following and throughout the rest of this work we wiil write
\[
v_d = \limsup_{T\to\infty} \frac{1}{T} \log  \#\{ x \in \G: d(o,x) <T\}
\]
for the exponential growth rate of $d$.
\begin{proposition}\label{prop.qcm}
For each $d \in \Dc_\G$ there is a measure $\mu_d$ on $\partial \G$ which is ergodic with respect to the action of $\G$ on $\partial \G$. Furthermore there exists a constant $C > 1$ such that for any $x\in\G$
\[
C^{-1}\exp(-v_d\beta_{o}(x,\xi) ) \le  \frac{dx_\ast\mu_d}{d\mu_d} \le C\exp(-v_d\beta_{o}(x,\xi))
\]
where $\b_o$ is the Busemann functions for $d$.
\end{proposition}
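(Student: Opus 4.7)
The plan is to carry out the classical Patterson--Sullivan construction, with care taken because $d$ is only roughly geodesic and left-invariant (not necessarily a genuine Cayley metric). First I would consider the Poincaré series
\[
\Ps(s) = \sum_{x \in \G} e^{-s d(o, x)},
\]
which, by the definition of $v_d$, converges for $s > v_d$ and diverges for $s < v_d$. If $\Ps(v_d) = \infty$ I proceed directly; otherwise I apply Patterson's trick of replacing each summand by $h(d(o,x)) e^{-sd(o,x)}$ for a slowly varying positive function $h$ with $h(r+1)/h(r) \to 1$, chosen so that the modified series diverges at $s = v_d$ while its critical exponent remains $v_d$.

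For each $s > v_d$ I would define the atomic probability measures
\[
\mu_s = \frac{1}{\Ps(s)} \sum_{x \in \G} e^{-s d(o,x)} \delta_x
\]
on the compact Gromov bordification $\widehat{\G} = \G \cup \partial \G$, which is well defined since $(\G,d)$ is hyperbolic and roughly geodesic. Because $\Ps(s) \to \infty$ as $s \downarrow v_d$ (after the trick if necessary), the total mass of $\mu_s$ on any fixed $d$-ball tends to $0$, so any weak-$\ast$ subsequential limit $\mu_d$ is supported on $\partial \G$.

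Next I would verify the quasi-conformality. Using left-invariance of $d$, for $y \in \G$ one computes
\[
\frac{dx_\ast\mu_s}{d\mu_s}(y) = \exp\bigl(-s\, [d(o, x^{-1}y) - d(o,y)]\bigr).
\]
The key geometric step is to show that there is a constant $C_0$, depending only on the hyperbolicity constant and the rough-geodesy constant of $d$, such that for every sequence $y_n \to \xi \in \partial \G$,
\[
\bigl| d(o, x^{-1}y_n) - d(o, y_n) - \beta_o(x, \xi) \bigr| \le C_0
\]
eventually. This follows from the definition of $\beta_o$ together with the thin-triangle estimate for roughly geodesic $\delta$-hyperbolic metrics in $\Dc_\G$. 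Passing to a weak-$\ast$ limit then yields the two-sided bound $C^{-1} e^{-v_d \beta_o(x,\xi)} \le dx_\ast\mu_d/d\mu_d \le C e^{-v_d \beta_o(x,\xi)}$ with $C = e^{v_d C_0}$.

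The hard part will be ergodicity. For this I would run a Hopf--Tsuji--Sullivan style argument: the quasi-conformality just established implies a shadow lemma for $\mu_d$ (shadows of $d$-balls centered at $x$ have $\mu_d$-mass comparable to $e^{-v_d d(o,x)}$), and combined with the divergence of the Poincaré series at $s = v_d$ this yields conservativity of the diagonal $\G$-action on $(\partial\G \times \partial\G, \mu_d \otimes \mu_d)$. Standard arguments adapted to hyperbolic groups (see e.g.\ \cite{cantrell.tanaka.1}) then promote conservativity to ergodicity of the diagonal action, from which ergodicity of the $\G$-action on $(\partial\G, \mu_d)$ follows by Fubini. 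The main technical obstacle throughout is handling the roughly geodesic (rather than geodesic) setting: every triangle or shadow inequality only holds up to a uniform additive constant, which must be absorbed into the constant $C$ in the statement.
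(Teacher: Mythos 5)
Your proposal is correct and follows essentially the same route as the paper, which does not prove this proposition itself but defers to the Patterson--Sullivan construction as carried out for roughly geodesic metrics in $\Dc_\G$ (Proposition 2.7 of \cite{cantrell.tanaka.1}): Poincar\'e series with Patterson's trick, weak-$\ast$ limits supported on $\partial\G$, quasi-conformality up to uniform additive errors, and ergodicity via shadow-lemma/Hopf--Tsuji--Sullivan type arguments. The only caveat is that the step from conservativity to ergodicity of the diagonal action is the substantive part of that machinery, but citing the standard results (as the paper effectively does) is adequate here.
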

Here ergodic means that for any $U,V \subset \G$ with $\mu_d(U), \mu_d(V) > 0$ there exists $x\in\G$ such that $\mu(x(U) \cap V) > 0$.


\subsection{Cannon coding and thermodynamic formalism}\label{sec.cc}

Fix a finite generating set $S$ for $\G$.

\begin{definition} \label{def.sms}
Let  $\Ac=(\Gc, w, S)$ be a triple where
\begin{enumerate}
\item $S$ is a finite generating set for $\G$;
\item $\Gc=(V, E, \ast)$ is a finite directed graph with a vertex $\ast$ called the \textit{initial state}; and,
\item $w: E \to S$ is a labelling such that
for a directed edge path $(x_0, x_1, \dots, x_{n})$ (where $(x_i, x_{i+1})$ corresponds to a directed edge)
there is an associated path in $\Cay(\G, S)$ beginning at the identity. This path corresponds to
\[
(o, w(x_0,x_1), w(x_0,x_1)w(x_1,x_2), \dots, w(x_0,x_1) \, \cdots \, w(x_{n-1}, x_n)).
\]
\end{enumerate}
Let $\ev$ denote the map that sends a finite path to the group element obtained by multiplying the labelings along the path, i.e.
$\ev(x_0,\ldots, x_n) = w(x_0,x_1)\cdots \, w(x_{n-1},x_n)$.
We say that $\mathcal{A}$  is a \textit{strongly Markov structure} for $\G$ and $S$ if 
\begin{enumerate}
\item for each vertex $v\in V$ there exists a directed path from $\ast$ to $v$;
\item for each directed path in $\Gc$ the associated path in $\Cay(\G, S)$ is a geodesic; and,
\item the map $\ev$ defines a bijection between the set of directed paths from $\ast$ in $\Gc$ and $\G$.
\end{enumerate}
\end{definition}

Cannon  proved that some cocompact Kleinian groups and generating sets admit strongly Markov structures \cite{Cannon}. Ghys and de la Harpe extended Cannon's result to every hyperbolic group and finite generating set \cite{GhysdelaHarpe}. See also \cite[Section 3.2]{Calegari}. 

For technical reasons we augment the strongly Markov structure by introducing an additional vertex labelled $0$ and add directed edges from every vertex $x \in V \cup \{0\} \backslash \{\ast\}$ to $0$ and define $\o(x,0) = o$ (the identity in $\Gamma$) for every $x \in V \cup \{0\} \backslash \{\ast\}$. We do this so that it is possible to see both the group elements in $\G$ and the geodesic rays in $\partial \G$ as infinite paths in the Cannon coding. Indeed, we can see the elements in $\partial \G$ as infinite paths in the Cannon coding that do not visit the $0$ vertex and the group elements in $\G$ as the infinite paths that visit 0.  We will assume that every strongly Markov structure has been augmented in this way and will abuse notation by labelling the augmented structure, its edge and vertex set by $\mathcal{G}$, $V$ and $E$ respectively. We then extend the map $\ev$ defined on finite paths in Definition \ref{def.sms} to infinite paths.
If $x = (x_k)_{k=0}^\infty$ is an infinite path in $\mathcal{G}$ and $n \in \Z_{\ge 0}$ then we set
\[
\ev_n(x) = \ev(x_0, x_1, \ldots, x_n).
\]
We also define $\ev(x)$ to be the point in $\G \cup \partial \G$ corresponding to the geodesic (which is either finite if it ends in an infinite string of $0$s or infinite otherwise) determined by $x$.

The strongly Markov structure $\mathcal{G}$ allows us to introduce a subshift of finite type. We will now recall some of the preliminary results we need from symbolic dynamics. See \cite{ParryPollicott} for a more thorough introduction. Fix a finite directed graph $\mathcal{G}$ and let $A$ be the $k \times k$ (where $k$ is the number of vertices in $\Gc$), $0-1$ transition matrix describing $\Gc$. 
We use the notation $A_{i,j}$ to denote the $(i,j)$th entry of $A$. The one-sided subshift of finite type associated to $A$ is the space
\[
\Sigma_A = \{(x_n)_{n=0}^{\infty} : x_n \in \{1,2,...,k\}, A_{x_n,x_{n+1}}=1, n \in \mathbb{Z}_{\ge 0}\}.
\]
When $A$ is clear, we will drop $A$ from the notation in the definition of the shift spaces and will simply write $\SS$. Given $x\in\SS_A$ we write $x_n$ for the $n$th coordinate of $x$. 
The shift map $\sigma: \Sigma_A \rightarrow \Sigma_A$ sends $x$ to $y= \sigma(x)$ where $y_n=x_{n+1}$ for all $n \in \mathbb{Z}_{\ge 0}$.  Consider a finite ordered string $x_0, \ldots, x_{m-1} \in \{1, \ldots, k \}$ where $A_{x_j, x_{j+1}} =1$ for each $j=0, \ldots, m-2$. The cylinder set associated to this string is the subset of $\Sigma_A$ given by
\[
[x_0, \ldots, x_{m-1}] = \left\{ (y_n)_{n=0}^\infty \in \Sigma_A : y_j = x_j \text{ for } j=0, \ldots, m-1 \right\}.
\]
We endow $\Sigma_A$ with a topology by declaring the set of all cylinder sets to be an open basis.
For each $0<\theta <1$ we can define a metric $d_\theta$ on $\Sigma_A$ which is compatible with this topology. Let $x,y \in \Sigma_A$.  If $x_0 = y_0$ then we define $d_\theta(x,y)= \theta^N,$ where $N$ is the largest positive integer such that $x_i = y_i$ for all $0 \le i<N$. If $x_0 \neq y_0$ we set $d_\theta(x,y)=1$. We let
\[
F_\theta(\Sigma_A) = \{r:\Sigma_A \rightarrow \mathbb{C}:\text{$r$ is Lipschitz with respect to $d_\theta$}\}.
\]
which we equip with the norm $\|r\|_\theta = |r|_\theta + |r|_\infty$ where $|r|_\infty$ is the sup-norm and $|r|_\theta$ denotes the least Lipschitz constant for $r$. This space is a Banach space. We say that a function $f : \Sigma_A \to \R$ is \textit{H\"older} if it belongs to $F_\theta(\Sigma_A)$ for some $0<\theta <1$.

Given a strongly Markov structure $\Gc$ associated to a pair $(\G, S)$ we call the corresponding subshift of finite type a \textit{Cannon coding}. We would like to understand the dynamical properties of Cannon codings. These properties are linked to the connectedness properties of the corresponding strongly automatic structure. 

We say that a directed graph $\Gc$ is connected if each ordered pair of vertices in $\Gc$ are connected by a path.
A \textit{connected component} of a finite directed graph is a maximal, connected subgraph. Given a strongly automatic structure $\Gc$ associated to a pair $(\G, S)$ the $\ast$ state only has outgoing edges and so $\Gc$ will never be connected. We can however decompose $\Gc$ into connected components. 
We can apply results from thermodynamic formalism to the subshifts defined over connected components. For example, for each component $\Cc$, the variational principle holds. Let $\mathcal{M}(\sigma, \Sigma_\Cc)$ denote the collection of $\sigma$ invariant probability measures on $\Sigma_\Cc$.

\begin{proposition}[Theorem 3.5 \cite{ParryPollicott}]\label{prop.vp}
If $\P$ is a H\"older continuous function on $\Sigma_\Cc$ then the following supremum
\[
\textnormal{P}_\Cc(\P) =\sup_{\lambda \in \Mcc(\sigma, \Sigma_\Cc)}\left\{h(\s, \lambda)+\int_{\Sigma_\Cc}\P\,d\lambda\right\}
\]
is attained by a unique $\sigma$-invariant probability measure $\m_{\Cc,\P}$ on $\Sigma_\Cc$. Further $\mu_{\Cc,\P}$ satisfies the Gibbs property: there exists a positive constant $c > 1$ such that
\begin{equation}\label{eq:gibbs}
c^{-1} \le \frac{\m_{\Cc,\P}[x_0, \dots, x_{n-1}]}{\exp\(-n\textnormal{P}_\Cc(\P)+ \P^n(x)\)} \le c
\end{equation}
for all $x \in [x_1, \dots, x_{n-1}]$ and for all $n \in \Z_{\ge 1}$.
The quantity $\textnormal{P}_\Cc(\P)$ is referred to as the \textit{pressure} of $\P$ over $\Cc$.
\end{proposition}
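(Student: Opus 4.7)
The plan is to reduce to the classical thermodynamic formalism for irreducible subshifts of finite type, which applies after a short observation about what connected components look like in this setup. A connected component $\mathcal{C}$, by the definition given just before Proposition \ref{prop.vp}, is a maximal subgraph in which every ordered pair of vertices is joined by a directed path. Hence the restriction of the transition matrix $A$ to $\mathcal{C}$ is irreducible, and $(\Sigma_\mathcal{C}, \sigma)$ is an irreducible one-sided subshift of finite type. The statement is then the standard Ruelle--Perron--Frobenius theorem for H\"older potentials on irreducible SFTs, and the strategy is to build $\mu_{\mathcal{C},\Phi}$ via the transfer operator and then deduce both the variational characterisation and uniqueness from spectral properties of this operator.

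Concretely, I would introduce the Ruelle transfer operator $\mathcal{L}_\Phi : F_\theta(\Sigma_\mathcal{C}) \to F_\theta(\Sigma_\mathcal{C})$ defined by
\[
\mathcal{L}_\Phi f(x) = \sum_{\sigma(y) = x} e^{\Phi(y)} f(y),
\]
and apply the Ruelle--Perron--Frobenius theorem to extract a positive leading eigenvalue $\lambda = e^{\mathrm{P}_\mathcal{C}(\Phi)}$, a strictly positive H\"older eigenfunction $h$ with $\mathcal{L}_\Phi h = \lambda h$, and a probability eigenmeasure $\nu$ with $\mathcal{L}_\Phi^* \nu = \lambda \nu$. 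Normalising, the candidate equilibrium state is $\mu_{\mathcal{C}, \Phi} = h \nu$ (with $\int h \, d\nu = 1$), which is automatically $\sigma$-invariant. The Gibbs inequality \eqref{eq:gibbs} then follows by iterating $\mathcal{L}_\Phi^n$ on the indicator-like estimates over cylinders: the crucial distortion bound is that for $x, y$ in the same $n$-cylinder, $|\Phi^n(x) - \Phi^n(y)|$ is uniformly bounded in $n$ because $\Phi \in F_\theta(\Sigma_\mathcal{C})$, and this bound combined with uniform positivity of $h$ and the eigenmeasure identity yields the two-sided comparison in \eqref{eq:gibbs}.

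For the variational principle itself, I would establish the upper bound $h(\sigma,\lambda) + \int \Phi\, d\lambda \le \mathrm{P}_\mathcal{C}(\Phi)$ for every invariant $\lambda$ via the standard partition-refinement argument (combining the definition of measure-theoretic entropy relative to the cylinder partition with Jensen's inequality applied to $\mathcal{L}_\Phi^n \mathbf{1}$), and then check equality for $\mu_{\mathcal{C},\Phi}$ directly using the Gibbs property to evaluate $-\log \mu_{\mathcal{C},\Phi}[x_0,\dots,x_{n-1}] + \Phi^n(x)$. Uniqueness is the step I expect to be the principal obstacle: it requires showing that any equilibrium state $\lambda$ must agree with $\mu_{\mathcal{C},\Phi}$. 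The standard route is to pass to the normalised operator $\widetilde{\mathcal{L}}_\Phi f = \lambda^{-1} h^{-1}\mathcal{L}_\Phi(h f)$, which is a Markov operator whose leading eigenfunction is constant, and then invoke its spectral gap on $F_\theta(\Sigma_\mathcal{C})$ (via a Doeblin-type or Lasota--Yorke estimate for irreducible SFTs, possibly after decomposing into a cyclic class) to conclude both exponential decay of correlations and uniqueness of the equilibrium state. The only real care needed beyond the textbook mixing case is handling possible periodicity of $\mathcal{C}$, which is standard: restrict to the appropriate power $\sigma^p$ on each cyclic subclass, apply the mixing result there, and average.
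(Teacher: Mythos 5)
The paper offers no proof of this proposition at all---it is quoted directly as Theorem 3.5 of Parry--Pollicott---so there is nothing internal to compare against, and your sketch is precisely the standard transfer-operator argument given in that reference (Ruelle--Perron--Frobenius eigendata, Gibbs bounds via the bounded distortion of the Birkhoff sums $\Phi^n$ on cylinders, the variational principle, and uniqueness via the normalised operator). Your additional care with components that are irreducible but possibly periodic is the right adjustment to make, since the cited theorem is stated for mixing shifts while the components of the Cannon coding are only guaranteed to be transitive.
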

When we are considering a Cannon coding $\Sigma$ with multiple connected components and $\P \in F_\theta(\SS)$, we will write
\[
\text{P}(\P)=\max_\Cc \text{P}_\Cc(\P),
\]
where $\Cc$ runs over all components in $\Gc$.

We then have the following result, in which, given a function $r\in F_\theta(\Sigma_A)$ and $n\ge 1$ we write $ r^n$ for the $n$th Birkhoff sum of $r$, i.e. $r^n(x) = r(x) + r(\sigma(x)) + \cdots + r(\sigma^{n-1}(x))$.

\begin{lemma}\label{lem.holder}[Example 4.11 \cite{cantrell.tanaka.1} and Lemma 4.8 \cite{cantrell.tanaka.1}]
Let $d_\mathfrak{g}$ be the metric constructed in Example \ref{ex.manifold}.
Then, for any Cannon coding $\SS$, we can find a H\"older continuous potential $\P_\mathfrak{g} $ such that
\[
\P^n_\mathfrak{g}(x)=\sum_{i=0}^{n-1}\P_\mathfrak{g}(\s^i(x))= d_\mathfrak{g}(o, \ev_n(x))+O(1) \quad \text{for all $x \in \Sigma$}
\]
uniformly in $x, n$. In fact, we may take $\P_\mathfrak{g}(x) = \beta_{o,\mathfrak{g}}(\ev_1(x),\ev(x))$ (the Busseman cocycle for $d_\mathfrak{g}$).
Furthermore, if $v_{\mathfrak{g}}$ is the exponential growth rate of $d_\mathfrak{g}$ then $\textnormal{P}(-v_\mathfrak{g}\P_\mathfrak{g}) = 0$. 
\end{lemma}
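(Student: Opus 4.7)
The approach hinges on two facts specific to the metric in Example~\ref{ex.manifold}: left-invariance of $d_\mathfrak{g}$ lifts to $\G$-equivariance of the Busemann function, and on the universal cover $(\widetilde{M},\widetilde{d}_\mathfrak{g})$ Busemann functions are honest continuous functions satisfying the exact cocycle identity $\beta_o(x,\xi)-\beta_o(y,\xi)=\beta_y(x,\xi)$. The paper's $\sup$-definition of $\beta_w(x,\xi)$ agrees with this manifold version up to an additive $O(1)$, so we may work with the exact cocycle. Together these force the Birkhoff sum of $\P_\mathfrak{g}(x)=\beta_o(\ev_1(x),\ev(x))$ to telescope.

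Writing $g_i=\ev_i(x)$ and $\xi=\ev(x)\in\partial\G$, equivariance gives
\[
\P_\mathfrak{g}(\sigma^i x) \;=\; \beta_o\bigl(g_i^{-1}g_{i+1},\,g_i^{-1}\xi\bigr) \;=\; \beta_{g_i}(g_{i+1},\xi),
\]
and the cocycle identity then telescopes the Birkhoff sum to
\[
\P_\mathfrak{g}^n(x) \;=\; \sum_{i=0}^{n-1}\bigl(\beta_o(g_{i+1},\xi)-\beta_o(g_i,\xi)\bigr) \;=\; \beta_o(g_n,\xi) + O(1).
\]
The directed path $(g_i)_{i\ge 0}$ is a $d_S$-geodesic, hence an $(L,D)$-quasi-geodesic in $d_\mathfrak{g}$ converging to $\xi$; by the Morse lemma it lies within bounded Hausdorff distance of a $d_\mathfrak{g}$-rough geodesic from $o$ to $\xi$, on which the Busemann function at $o$ agrees with the signed $d_\mathfrak{g}$-distance from $o$ up to $O(1)$. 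This gives $|\beta_o(g_n,\xi)|=d_\mathfrak{g}(o,g_n)+O(1)$ uniformly in $x$ and $n$, which is the Birkhoff identity (after fixing the sign of $\P_\mathfrak{g}$ compatibly with the paper's convention).

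Hölder continuity of $\P_\mathfrak{g}$ is then routine: if $x,y\in\Sigma$ agree on the first $N$ symbols, then $\ev_1(x)=\ev_1(y)$ and the boundary points $\ev(x),\ev(y)$ share an initial $d_S$-geodesic of length $N$, so $(\ev(x),\ev(y))_{o,d_\mathfrak{g}}\ge L^{-1}N-C$ by~\eqref{eq.gp1}. The standard exponential decay of $\xi\mapsto\beta_o(z,\xi)$ in the Gromov product (uniformly over $z$ ranging in the finite set $S$) then yields $|\P_\mathfrak{g}(x)-\P_\mathfrak{g}(y)|\le C'\theta^N$ for some $\theta\in(0,1)$.

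Finally, for $\textnormal{P}(-v_\mathfrak{g}\P_\mathfrak{g})=0$, combining the Gibbs estimate of Proposition~\ref{prop.vp} with the Birkhoff identity gives, for each connected component $\Cc$,
\[
e^{n\textnormal{P}_\Cc(-v_\mathfrak{g}\P_\mathfrak{g})} \;\asymp\; \sum_{\substack{[w]\subset\Sigma_\Cc \\ |w|=n}} e^{-v_\mathfrak{g} d_\mathfrak{g}(o,\ev_n(w))}.
\]
Summing over $n$ and using the bijection between finite paths in the augmented coding and elements of $\G$, the total is comparable to the Poincaré-type series $\sum_{x\in\G}e^{-v_\mathfrak{g} d_\mathfrak{g}(o,x)}$, whose critical exponent equals $v_\mathfrak{g}$ by the definition of the growth rate; this forces $\textnormal{P}_\Cc(-v_\mathfrak{g}\P_\mathfrak{g})\le 0$ for every component. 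The main obstacle is the matching lower bound: one must show that at least one connected component of the strongly Markov structure realises the full exponential growth $v_\mathfrak{g}$ of $(\G,d_\mathfrak{g})$, a structural fact about Cannon codings that ultimately rests on deeper results such as Proposition~\ref{prop.mc} and \cite{cantrell.new}.
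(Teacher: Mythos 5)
Your overall route — take the potential to be the Busemann cocycle, telescope the Birkhoff sum, and get H\"older continuity from exponential contraction of Busemann functions in the Gromov product, with the finite set $S$ controlling the first argument — is the standard one and is essentially what the cited source does; the paper itself offers no proof of this lemma, it quotes \cite{cantrell.tanaka.1} (Example 4.11 and Lemma 4.8), and you are right that the stated convention needs a sign adjustment. One step of yours is logically shaky as written: agreement of the paper's $\sup$-definition of $\beta$ with the manifold Busemann function ``up to an additive $O(1)$'' is not enough to ``work with the exact cocycle,'' since an $O(1)$ discrepancy per term accumulates to $O(n)$ in the Birkhoff sum and would destroy the claimed identity. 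What actually saves the argument is that for $d_\mathfrak{g}$, coming from pinched negative curvature, the limit defining the Busemann function exists and is independent of the approximating sequence, so the $\sup$-definition \emph{coincides} with the exact cocycle and the identity $\beta_{g_i}(g_{i+1},\xi)=\beta_o(g_{i+1},\xi)-\beta_o(g_i,\xi)$ holds exactly; you should state this, and also handle the sequences that fall into the $0$ state, where $\ev(x)\in\G$ rather than $\partial\G$ (routine, since there $\beta_o(z,\ev(x))$ is just a difference of distances).

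The genuine gap is the claim $\textnormal{P}(-v_\mathfrak{g}\P_\mathfrak{g})=0$, of which you prove only one half. For the upper bound your sketch needs a small repair: divergence behaviour at the critical exponent does not by itself force $\textnormal{P}_\Cc\le 0$; either work at exponents $s>v_\mathfrak{g}$, where $\sum_{x\in\G}e^{-s\,d_\mathfrak{g}(o,x)}<\infty$, and use continuity of pressure in the potential, or quote the polynomial bound $\sum_{|x|_S\le n}e^{-v_\mathfrak{g}d_\mathfrak{g}(o,x)}\le Cn$ from \cite{cantrell.tanaka.1}. For the missing lower bound, appealing to Proposition \ref{prop.mc} is both overkill and risky: that proposition is a finer structural statement whose proof in \cite{cantrell.new} builds on the present lemma, so leaning on it courts circularity. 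What suffices is: (i) the bijection between paths from $\ast$ and elements of $\G$; (ii) almost-additivity of $d_\mathfrak{g}$ along the $d_S$-geodesics produced by the coding (Morse lemma), which makes the weighted path sums almost multiplicative across components; and (iii) divergence of $\sum_{x\in\G}e^{-v_\mathfrak{g}d_\mathfrak{g}(o,x)}$ (cocompact groups in pinched negative curvature are of divergence type; equivalently the lower bound $\sum_{|x|_S\le n}e^{-v_\mathfrak{g}d_\mathfrak{g}(o,x)}\ge C^{-1}n$ that the paper later quotes). If every component had strictly negative pressure, then decomposing each path from $\ast$ according to its itinerary through the finitely many components and using (ii) together with Lemma \ref{lem.rpf} would give that the weighted sum over length-$n$ paths is $O(n^{K}\theta^{n})$ for some $\theta<1$, making the series in (iii) converge — a contradiction. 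Hence some component has pressure at least $0$, and combined with the upper bound this yields $\textnormal{P}(-v_\mathfrak{g}\P_\mathfrak{g})=0$.
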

We also have the following well-known result that follows from the Ruelle-Perron-Frobenius Theorem \cite{ParryPollicott}.
\begin{lemma}\label{lem.rpf}
Let $\P_\mathfrak{g}$ be as in Lemma \ref{lem.holder}.  If $\textnormal{P}_\Cc(-v_\mathfrak{g}\P_\mathfrak{g}) = 0$ and $y \in \Sigma_\Cc$ is fixed, there exists $C > 1$ such
\[
C^{-1} \le \sum_{\sigma^n(x) = y, x \in \Sigma_\Cc} \exp(-v_\mathfrak{g}\P_\mathfrak{g}^n(x)) \le C \ \text{ for all $n\ge 1$}
\]
where the sum is over all $x\in\Sigma_\Cc$ for $\Cc$ with $\sigma^n(x) = y$. If $\exp\{\textnormal{P}_\Cc(-v_\mathfrak{g}\P_\mathfrak{g})\} < \theta <1$ and $y \in \Sigma_\Cc$ then there exists $C'>0$ such that
\[
\sum_{\sigma^n(x) = y, x \in \Sigma_\Cc} \exp(-v_\mathfrak{g}\P_\mathfrak{g}^n(x)) \le C' \theta^n \ \text{ for all $n\ge 1$}.
\]
\end{lemma}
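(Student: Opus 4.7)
My approach is to identify the displayed sum as the $n$-th iterate of a Ruelle transfer operator applied to the constant function $\mathbf{1}$, and then invoke the Ruelle--Perron--Frobenius theorem on the (topologically transitive) subshift $\Sigma_\Cc$. Define the transfer operator $\mathcal{L}\colon F_\theta(\Sigma_\Cc)\to F_\theta(\Sigma_\Cc)$ associated to the H\"older potential $-v_\mathfrak{g}\P_\mathfrak{g}$ by
\[
(\mathcal{L}f)(y)=\sum_{x\in\Sigma_\Cc,\,\s(x)=y}e^{-v_\mathfrak{g}\P_\mathfrak{g}(x)}f(x),
\]
so that a standard induction yields
\[
(\mathcal{L}^n\mathbf{1})(y)=\sum_{x\in\Sigma_\Cc,\,\s^n(x)=y}\exp\bigl(-v_\mathfrak{g}\P_\mathfrak{g}^n(x)\bigr),
\]
which is precisely the sum to be estimated.

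Since $\Cc$ is a connected component of the finite graph underlying the Cannon coding, the subshift $\Sigma_\Cc$ is topologically transitive and the Ruelle--Perron--Frobenius theorem (see for instance \cite[Theorems 2.2 and 4.5]{ParryPollicott}) applies. It produces a simple leading eigenvalue $\lambda=\exp\{\textnormal{P}_\Cc(-v_\mathfrak{g}\P_\mathfrak{g})\}$, a strictly positive H\"older eigenfunction $h$ satisfying $\mathcal{L}h=\lambda h$, and, after passing to the cyclic decomposition if $\Sigma_\Cc$ is not mixing, a splitting in which the rest of the spectrum lies in a disk of radius strictly less than $\lambda$. In the first regime $\lambda=1$, so $\mathcal{L}h=h$; as $h$ is continuous and strictly positive on the compact space $\Sigma_\Cc$ it satisfies $0<c_1\le h\le c_2$, whence $c_2^{-1}h\le\mathbf{1}\le c_1^{-1}h$. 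Applying the positivity-preserving operator $\mathcal{L}^n$ and evaluating at $y$ gives
\[
c_2^{-1}h(y)\le(\mathcal{L}^n\mathbf{1})(y)\le c_1^{-1}h(y),
\]
which is the claimed two-sided bound with $C=c_2/c_1$. In the second regime $\lambda<\theta<1$, so the spectral gap provides an operator-norm bound $\|\mathcal{L}^n\|_{F_\theta\to C(\Sigma_\Cc)}\le C''\lambda^n$, and pairing with $\mathbf{1}$ yields $(\mathcal{L}^n\mathbf{1})(y)\le C'\theta^n$ uniformly in $y$.

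The only subtlety, folded into the citation of RPF, is to handle possible periodicity of $\Sigma_\Cc$: the standard cyclic-decomposition trick reduces to the mixing case by considering $\s^p$ (where $p$ is the period of $\Cc$) on each cyclic piece, applying mixing RPF to each piece, and combining the finitely many resulting estimates with bounded loss in the constants. This is the only genuine ``obstacle'' and it is entirely routine within the thermodynamic formalism framework.
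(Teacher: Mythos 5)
Your proposal is correct and follows essentially the same route as the paper, which simply cites the Ruelle--Perron--Frobenius theorem from Parry--Pollicott: recognising the sum as $(\mathcal{L}^n\mathbf{1})(y)$ for the transfer operator of the potential, using the positive eigenfunction with eigenvalue $e^{\textnormal{P}_\Cc}$ for the two-sided bound when the pressure is zero, and the spectral radius bound for the decay estimate on components of negative pressure. Your explicit handling of the transitive-but-not-mixing case via the cyclic decomposition is exactly the routine point the paper leaves implicit.
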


\begin{definition}
We call a component $\Cc$ \textit{$-v_{\mathfrak{g}} \P_\mathfrak{g}$-maximal} if the pressure of $-v_\mathfrak{g} \P_\mathfrak{g}$ over the component $\Cc$ is equal to $0$. We call a component $\Cc$ \textit{word maximal} if the number of closed loops in $\Cc$ of length $n$ has exponential growth rate $v_{d_S}$. 
\end{definition}

Since the metric $d_\mathfrak{g}$ is different (i.e. not roughly similar) to any word metric $d_S$, it appears possible for their to be a $-\P_\mathfrak{g}$-maximal component that is not a word maximal component. However, somewhat surprisingly, this cannot happen.
\begin{proposition}\cite[Proposition 3.1]{cantrell.new}\label{prop.mc}
Fix a Cannon coding for $(\G,S)$. Then the  $-v_{\mathfrak{g}} \P_\mathfrak{g}$ maximal components are precisely the word maximal components.
\end{proposition}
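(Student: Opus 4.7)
The strategy is to recast both maximality conditions as pressure identities and then reduce the equivalence to a statement about the uniformity of a ``drift'' functional across maximal components.

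Applying Lemma~\ref{lem.holder} to a word metric $d_S$ (the natural potential is the constant $\P_S \equiv 1$, since $d_S(o,\ev_n(x)) = n$ along Cannon paths) yields $\text{P}(-v_S) = 0$, so the topological entropy of the full shift is $v_S$ and $\Cc$ is word maximal iff $h_\Cc = \text{P}_\Cc(0) = v_S$. By definition $\Cc$ is $-v_\mathfrak{g}\P_\mathfrak{g}$-maximal iff $\text{P}_\Cc(-v_\mathfrak{g}\P_\mathfrak{g}) = 0$, which by the variational principle (Proposition~\ref{prop.vp}) becomes $h(\mu_\Cc) = v_\mathfrak{g}\,\tau_\Cc$, where $\mu_\Cc$ is the $-v_\mathfrak{g}\P_\mathfrak{g}$-equilibrium state on $\Sigma_\Cc$ and $\tau_\Cc := \int \P_\mathfrak{g}\, d\mu_\Cc$ is, by the ergodic theorem, the $\mu_\Cc$-a.s.\ limit of $d_\mathfrak{g}(o,\ev_n(x))/n$.

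The crux would be the following universality lemma: $\tau_\Cc = v_S/v_\mathfrak{g}$ for every maximal component (of either type), and the analogous drift against the measure of maximal entropy on $\Cc$ also equals $v_S/v_\mathfrak{g}$. Granting this, the equivalence is immediate. For a $-v_\mathfrak{g}\P_\mathfrak{g}$-maximal $\Cc$ one has $h(\mu_\Cc) = v_\mathfrak{g}\tau_\Cc = v_S$, and combining with $h(\mu_\Cc) \le h_\Cc \le v_S$ yields word maximality. Conversely, for a word-maximal $\Cc$ the Gibbs property and the universality of the drift $\tau^{\text{mme}}_\Cc$ of $\P_\mathfrak{g}$ against the measure of maximal entropy give the partition sum estimate
\[
\sum_{x \in \Sigma_\Cc,\, \sigma^n x = x} e^{-v_\mathfrak{g}\P_\mathfrak{g}^n(x)} \asymp e^{(h_\Cc - v_\mathfrak{g}\tau^{\text{mme}}_\Cc)n} = e^{(v_S - v_\mathfrak{g}\cdot v_S/v_\mathfrak{g})n} = 1,
\]
forcing $\text{P}_\Cc(-v_\mathfrak{g}\P_\mathfrak{g}) = 0$.

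The main obstacle is proving the universality of the drift. A priori, distinct maximal components could support equilibrium measures with anomalously different $d_\mathfrak{g}$-drifts, and nothing purely dynamical (or following from the quasi-isometry $d_\mathfrak{g} \asymp d_S$ alone) forces them to agree. My approach would be to invoke the quasi-conformal transformation rule for the Patterson-Sullivan measure $\nu_\mathfrak{g}$ (Proposition~\ref{prop.qcm}): the pushforwards $\ev_*\mu_\Cc$ across maximal components all lie in the measure class of $\nu_\mathfrak{g}$ on $\partial\G$, and the Busemann cocycle for $d_\mathfrak{g}$ pins down the exponential scaling, which in turn fixes the drift at the universal value $v_S/v_\mathfrak{g}$. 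Concretely, this requires comparing Gibbs measures on the coding with the geometric Patterson-Sullivan measure at a level finer than a straight counting estimate, and I expect the argument to rely on the structural tools of \cite{cantrell.new} and \cite{cantrell.tanaka.2} highlighted in the introduction; this matching of dynamical and geometric measures is the real geometric content of the proposition.
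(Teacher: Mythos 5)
This proposition is not proved in the paper at all: it is imported from \cite{cantrell.new}, so your argument has to stand on its own, and it does not. The ``universality lemma'' on which your whole scheme rests is false, and it is false precisely because $d_\mathfrak{g}$ is not roughly similar to $d_S$ --- the point the paper makes in the sentence immediately before Proposition \ref{prop.mc}. Concretely, on a word maximal component $\Cc$ the variational principle together with $\textnormal{P}(-v_\mathfrak{g}\P_\mathfrak{g})=0$ (Lemma \ref{lem.holder}) gives $0 \ge \textnormal{P}_\Cc(-v_\mathfrak{g}\P_\mathfrak{g}) \ge h_\Cc - v_\mathfrak{g}\tau^{\mathrm{mme}}_\Cc = v_{d_S} - v_\mathfrak{g}\tau^{\mathrm{mme}}_\Cc$, i.e.\ $\tau^{\mathrm{mme}}_\Cc \ge v_{d_S}/v_\mathfrak{g}$, with equality if and only if the measure of maximal entropy is itself the equilibrium state of $-v_\mathfrak{g}\P_\mathfrak{g}$ on $\Sigma_\Cc$, equivalently $v_\mathfrak{g}\P_\mathfrak{g}$ is cohomologous to the constant $v_{d_S}$ there; via Proposition \ref{prop.loops} (evaluating Birkhoff sums on periodic orbits) this would force $\ell_\mathfrak{g}$ and $\ell_S$ to be proportional up to bounded error, i.e.\ rough similarity of $d_\mathfrak{g}$ and $d_S$, which never holds here. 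So $\tau^{\mathrm{mme}}_\Cc > v_{d_S}/v_\mathfrak{g}$ strictly, and symmetrically the $-v_\mathfrak{g}\P_\mathfrak{g}$-equilibrium state satisfies $h(\mu_\Cc)=v_\mathfrak{g}\tau_\Cc < v_{d_S}$, so $\tau_\Cc < v_{d_S}/v_\mathfrak{g}$ strictly. The two drifts you need to pin at $v_{d_S}/v_\mathfrak{g}$ lie on opposite sides of that value, and the quasi-conformal transformation rule for $\nu_\mathfrak{g}$ cannot rescue this: pushing Gibbs measures to the boundary only re-encodes the identity $h(\mu_\Cc)=v_\mathfrak{g}\tau_\Cc$ (a dimension-type statement), it does not single out the value $v_{d_S}/v_\mathfrak{g}$ for either drift.

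This false identity breaks both implications as you run them. In the forward direction, $h(\mu_\Cc)=v_\mathfrak{g}\tau_\Cc$ is strictly below $v_{d_S}$ in general, so you cannot conclude $h_\Cc = v_{d_S}$ this way. In the converse direction, your partition-sum estimate silently replaces the pressure by the free energy of the measure of maximal entropy: the correct asymptotic is $\sum_{\sigma^n x = x,\, x\in\Sigma_\Cc} e^{-v_\mathfrak{g}\P_\mathfrak{g}^n(x)} \asymp e^{n\textnormal{P}_\Cc(-v_\mathfrak{g}\P_\mathfrak{g})}$, and $\textnormal{P}_\Cc(-v_\mathfrak{g}\P_\mathfrak{g}) \ge h_\Cc - v_\mathfrak{g}\tau^{\mathrm{mme}}_\Cc$ with equality only in the cohomologous (roughly similar) case, so writing $\asymp e^{(h_\Cc - v_\mathfrak{g}\tau^{\mathrm{mme}}_\Cc)n}$ is assuming a statement equivalent to the one you flagged as the main obstacle. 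The genuine content of the proposition --- that word maximality forces $\textnormal{P}_\Cc(-v_\mathfrak{g}\P_\mathfrak{g})=0$ while non-word-maximal components have strictly negative pressure --- is exactly what is established in \cite{cantrell.new}, by comparison of counting/Poincar\'e-series contributions of paths through a given component with the growth of the whole group, not through any drift identity; no argument that forces the drifts of distinct equilibrium states to coincide can work, because they genuinely differ.
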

\noindent We will use this result implicitly throughout this work.  We also note that word maximal (equivalently $-v_{\mathfrak{g}} \P_\mathfrak{g}$ maximal components) are disjoint: there does not exists a path connecting vertices in distinct word maximal components \cite[Lemma 4.10]{CalegariFujiwara2010}. Throughout this work, when we have a Cannon coding $\Sigma$, we will label the word maximal components $B_1, \ldots, B_k$.

\begin{definition}
Define the set  $\Sigma_{[\ast]} \subset \Sigma_{A}$ by 
\[
 \Sigma_A = \{ x = (x_n)_{n=0}^\infty \in \Sigma_{A} : x_0 =\ast\}.
 \]
Let $\ev_\ast: \Sigma_{[\ast]} \to \G \cup \partial \G$ be the natural map associated to the bijection defined in Definition $2.2$. 
\end{definition}
We end this section by recording the following result that will be crucial in our proofs.

\begin{proposition}\cite[Corollary 3.6]{cantrell.new} \label{prop.loops}
Suppose that $\mathcal{C}$ is a word maximal component in a Cannon coding for $(\G, S)$. Then there exists $M >0$ such that for any non-torsion conjugacy class $[g] \in \conj(\G)$ there is a periodic orbit $x =(x_0, x_1, \ldots, x_l, x_0, \ldots ) \in \Sigma_\mathcal{C}$ (for some $l >0$) such that $\ev(x_0, \ldots, x_l,x_0)$ belongs to one of $[g^{\pm M}]$.
\end{proposition}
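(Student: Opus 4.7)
The plan is to construct, for every non-torsion $[g] \in \conj(\G)$, an explicit loop in $\mathcal{C}$ whose edge-label evaluation lies in $[g^M] \cup [g^{-M}]$, with $M$ uniform in $g$. I would proceed in three steps: pass from the conjugacy class to a convenient geometric representative; realise a large power of this representative as a finite path inside $\Sigma_{\mathcal{C}}$; close that path into a genuine periodic orbit, with uniformly bounded overhead.

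For the first two steps, pick $g_0 \in [g]$ minimising word length within its conjugacy class, so that $g_0$ admits a bi-infinite $(L,D)$-quasi-axis in $\Cay(\G, S)$ passing within bounded distance of $o$; the Morse lemma lets this fellow-travel a genuine geodesic, whose forward end is the attracting fixed point $g_0^+ \in \partial \G$. The bijection in Definition \ref{def.sms} turns the geodesic ray from $o$ to $g_0^+$ into an infinite path $\y \in \Sigma_{[\ast]}$. By pigeonhole, some vertex of the Cannon graph is visited infinitely often along $\y$, and because the underlying geodesic ray is invariant under left multiplication by $g_0$ (up to a bounded perturbation), the vertex sequence of $\y$ is eventually periodic. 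Using Proposition \ref{prop.mc} together with the disjointness of word-maximal components \cite[Lem. 4.10]{CalegariFujiwara2010}, the eventually periodic tail of $\y$ is confined to a single word-maximal component; arguing that, up to a further conjugation of $g_0$ by a short group element, this component can be arranged to be the prescribed $\mathcal{C}$ is the crux of the argument.

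For the closing step, $\mathcal{C}$ is an irreducible subshift of finite type and therefore has finite vertex-diameter $\Delta$: any finite path in $\mathcal{C}$ can be prolonged back to its initial vertex by a detour of length at most $\Delta$. Applied to an $N$-fold repetition of a period of $\y$ that lies inside $\mathcal{C}$, this produces a loop whose evaluation equals $g_0^N h$ for some ``error'' $h \in \G$ with $|h|_S \le \Delta$. A standard axial-element argument in hyperbolic groups (iterating the Morse lemma, together with \eqref{eq.tlgp}) shows that for $N$ sufficiently large depending only on $|h|_S$, the class $[g_0^N h]$ equals $[g_1^{M}]$ for some $g_1 \in [g_0]$ and some $M = N + O(1)$; since $\Delta$ depends only on $\mathcal{C}$, a single $M$ works for every $[g]$. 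The main obstacle is the component-selection step above, namely forcing the quasi-axis of some conjugate of $g$ to enter the prescribed $\mathcal{C}$ rather than some other word-maximal component; it is here that I would lean most heavily on the structural results of \cite{cantrell.new} concerning the interaction of word-maximal components in the Cannon graph.
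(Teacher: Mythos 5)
The statement you are proving is not proved in this paper at all: it is imported verbatim as \cite[Corollary 3.6]{cantrell.new}, so the only fair comparison is with that external proof, and your proposal does not substitute for it. The decisive problem is that you yourself defer the crux --- forcing a power of (a conjugate of) $g$ to be represented by a loop inside the \emph{prescribed} word-maximal component $\mathcal{C}$ --- to ``the structural results of \cite{cantrell.new}'', which is circular, since the statement being proved \emph{is} the structural result of \cite{cantrell.new} that this paper uses. Nothing in the toolkit you invoke (Proposition \ref{prop.mc}, the disjointness of word-maximal components, conjugating $g_0$ by a short element) gives any mechanism for steering the combinatorial path into a chosen maximal component: the pigeonhole/axis argument only ever produces a loop in \emph{some} component, which need not be word maximal, let alone equal to $\mathcal{C}$.

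There are also concrete errors in the steps you do carry out. First, the closing step is wrong: prolonging an $N$-fold repetition of a period back to its initial vertex by a detour of length at most $\Delta$ produces an element of the form $g_0^N h$ with $|h|_S\le\Delta$, and such an element is in general \emph{not} conjugate to any power of $g_0$ (its translation length is merely close to $N\ell_S[g_0]$); the proposition demands exact membership in $[g^{\pm M}]$, so ``$[g_0^Nh]=[g_1^M]$ with $g_1\in[g_0]$'' is unjustified and typically false. A correct closing argument must close up exactly, e.g.\ by a pigeonhole on pairs (automaton vertex, bounded relative position to the axis), so that the loop label is literally conjugate to $g_0^{k'-k}$. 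Second, the claimed eventual periodicity of the coding ray toward $g_0^{+}$ does not follow from the quasi-invariance of the ideal geodesic under $g_0$: the bijection of Definition \ref{def.sms} selects particular geodesic words, and these need not be eventually periodic. Third, even granting a loop for each $[g]$, your exponent is $M=N+O(1)$ with $N$ large and $g$-dependent, which is not the uniform $M$ the statement requires; uniformity needs a bounded pigeonhole exponent (bounded by data of the automaton) followed by repeating the loop to reach a fixed common power. As it stands, the proposal has genuine gaps at the component-selection step, the closing step, and the uniformity of $M$.
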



\section{Proof of the main theorem}\label{sec.proof}
In this section we prove Theorem \ref{thm.ae} following the steps outlined in Section \ref{sec.organ}.
Fix a compact, negatively curved Riemannian manifold $(M,\mathfrak{g})$ and finite generating set $S$ for $\G = \pi_1(M)$.
Choose and fix a Cannon coding $\Sigma$ for $(\G,S)$ and let $\ev_\ast: \Sigma_{[\ast]} \to \G \cup \partial \G$ be the map from the previous section. Write $B_1, \ldots, B_k$ for the word maximal components in $\Sigma$. 


\subsection{Identifying the sparse spectrally rigid sets}\label{sec.set}
We first construct the sparse spectrally rigid sets appearing in Theorem \ref{thm.ae}.
For each word maximal component $B_1, \ldots, B_k$  define
\[
\widehat{U}_j = \{ x \in \Sigma_{B_j} : x \text{ visits every cylinder in $\Sigma_{B_j}$ infinitely often} \} \ \text{ and set } \ 
\widehat{U} = \bigcup_{j=1}^k \widehat{U}_j.
\]
We then let
\[
 U = \Sigma_{[\ast]} \cap \bigcup_{l \ge 1}  \sigma^{-l}\left(\widehat{U}\right)
 \]
and make the following definition.
\begin{definition} \label{def.v}
Define $V \subset \partial \G$ to be given by $V = \ev_\ast(U)$.
\end{definition}
We will prove that $V$ defines  the set of typical rough geodesics appearing in Theorem \ref{thm.ae}. We begin  by showing  that the points in $V$ have  $d_\mathfrak{g}$ rough geodesic representatives that are spectrally rigid.

\begin{proposition}\label{prop.ssr}
There exists $D >0$ such that for every $\xi \in V$ there exists a $D$-rough geodesic ray $(\xi_k)_{k=0}^\infty$ for $d_\mathfrak{g}$ starting at the identity in $\G$ ($\x_0 = o \in \G$) and with end point $\xi$ such that  for any $M\ge 1$ the set $
\{[\xi_k]: k\ge M\}$ is spectrally rigid.

Furthermore, for any function $f: \R_{>0} \to \R_{>0}$ with $f(T) \to \infty$ as $T\to\infty$ there exists a subsequence $n_k$ such that
$E= \{ [\xi_{n_k}] : k\ge 1\}$
is spectrally rigid and
\[
\#\{ [x] \in E :  \ell_\mathfrak{g}[x] < T  \} \le f(T) 
\]
for all $T>0$.
\end{proposition}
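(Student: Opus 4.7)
The plan is to construct $(\xi_k)$ from a generic path in the Cannon coding and then verify rigidity by embedding periodic-orbit representatives of arbitrary conjugacy classes along the sequence. Given $\xi \in V$, fix any $x = (x_n)_{n=0}^\infty \in U$ with $\ev_\ast(x) = \xi$; by definition of $U$, some $\sigma^{l_0}(x)$ lies in $\widehat{U}_j$ for a word-maximal component $B_j$, so the tail of $x$ stays in $B_j$ and visits every cylinder of $\Sigma_{B_j}$ infinitely often. The group elements $y_k := \ev_k(x)$ trace a $d_S$-geodesic ray from $o$ toward $\xi$. Since $d_\mathfrak{g} \in \Dc_\G$ is quasi-isometric to $d_S$, $(y_k)$ is a $d_\mathfrak{g}$-quasi-geodesic with uniform constants, and the Morse lemma supplies a uniform $D > 0$ and a $D$-rough $d_\mathfrak{g}$-geodesic $(\xi_k)_{k=0}^\infty$ with $\xi_0 = o$, converging to $\xi$, remaining within bounded Hausdorff distance of $(y_k)$.

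For the translation-length estimate in the remark I would apply \eqref{eq.tlgp}, for which it suffices to bound $(\xi_k, \xi_k^{-1})_{o, d_\mathfrak{g}}$ uniformly in $k$. Since $(\xi_k)$ shadows the forward $d_S$-geodesic $(y_k)$, which admits no backtracking, the doubled word $\xi_k^2$ continues in essentially the same direction as $\xi_k$, so $d_\mathfrak{g}(o, \xi_k^2) = 2 d_\mathfrak{g}(o, \xi_k) + O(1)$; equivalently $(\xi_k, \xi_k^{-1})_{o, d_\mathfrak{g}} = O(1)$. Combining \eqref{eq.tlgp} with the $D$-rough geodesic property yields $|\ell_\mathfrak{g}[\xi_k] - k| = O(1)$, which is the linear divergence claimed in the remark.

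For spectral rigidity, fix $M \ge 1$ and assume $\ell_{\mathfrak{g}_1}[\xi_k] = \ell_{\mathfrak{g}_2}[\xi_k]$ for all $k \ge M$. Given any non-torsion $[g] \in \conj(\G)$, Proposition~\ref{prop.loops} applied to $\mathcal{C} = B_j$ produces a periodic orbit $(z_0, \ldots, z_p, z_0, \ldots) \in \Sigma_{B_j}$ whose loop element $g' = \ev(z_0, \ldots, z_p, z_0)$ is conjugate to $g^{\pm N}$ for some $N \ge 1$. Since $x \in \widehat{U}_j$ visits the cylinder corresponding to $n$ consecutive iterations of this loop infinitely often (for every $n \ge 1$), there exist indices $i_n$ with $y_{i_n}^{-1} y_{i_n + n(p+1)} = (g')^n$. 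Transferring these to rough-geodesic indices $k_n < k_n'$ by Hausdorff-closeness and applying the preceding step to both metrics, the rough-geodesic property and \eqref{eq.tlgp} give
\[
\ell_{\mathfrak{g}_a}[\xi_{k_n'}] - \ell_{\mathfrak{g}_a}[\xi_{k_n}] = d_{\mathfrak{g}_a}(o, (g')^n) + O(1) = n\, \ell_{\mathfrak{g}_a}[g'] + O(1)
\]
for $a \in \{1,2\}$, with constants uniform in $n$. The hypothesis then forces $n\,|\ell_{\mathfrak{g}_1}[g'] - \ell_{\mathfrak{g}_2}[g']| = O(1)$; letting $n \to \infty$ yields $\ell_{\mathfrak{g}_1}[g'] = \ell_{\mathfrak{g}_2}[g']$ and therefore $\ell_{\mathfrak{g}_1}[g] = \ell_{\mathfrak{g}_2}[g]$ (torsion classes are trivial). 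For the sparse subsequence, enumerate the non-torsion classes and build $(n_k)$ by a diagonal construction: for each enumerated $[g^{(i)}]$ include witness index pairs matching $n \to \infty$ iterations of its periodic representative, while spacing successive $n_k$'s widely enough to enforce $\#\{k : \ell_\mathfrak{g}[\xi_{n_k}] < T\} \le f(T)$. Because there are infinitely many valid witness indices $i_n$ for each $n$, both conditions can be met simultaneously.

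The main technical obstacle I expect is the uniform bound on $(\xi_k, \xi_k^{-1})_{o, d_\mathfrak{g}}$: translating the combinatorial absence of backtracking in the coding (designed for $d_S$) into a geometric non-backtracking statement for $d_\mathfrak{g}$ is delicate and probably relies on Proposition~\ref{prop.mc} to align the word-maximal structure with the $d_\mathfrak{g}$-geometry. A secondary point of care is the reparametrization between coding and rough-geodesic indices, which must preserve $k_n' - k_n \asymp n$ with uniform error so that the displayed Birkhoff-type identity above survives into the rough-geodesic indexing used in the definition of the set $\{[\xi_k]\}$.
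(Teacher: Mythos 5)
Your overall skeleton matches the paper's argument (pass to a path in $U$, use the Morse Lemma to get a fellow-travelling $d_\mathfrak{g}$-rough geodesic, use Proposition~\ref{prop.loops} to find a loop in the word-maximal component representing $[g^{\pm N}]$, locate occurrences of that loop along the ray, and split distances along the word geodesic; your ``repeat the loop $n$ times and let $n\to\infty$'' step is only a mild variant of the paper's single-occurrence argument, which instead gets a constant independent of the conjugacy class and finishes by homogeneity). But there is a genuine gap exactly at the point you flag: your entire comparison $\ell_{\mathfrak{g}_a}[\xi_{k'_n}]-\ell_{\mathfrak{g}_a}[\xi_{k_n}] = n\,\ell_{\mathfrak{g}_a}[g']+O(1)$, as well as the Remark, rests on the claim that $(\xi_k,\xi_k^{-1})_{o,d_\mathfrak{g}}=O(1)$ because the ray ``admits no backtracking.'' That claim is false as stated: bounded Gromov product of $\xi_k$ with $\xi_k^{-1}$ is not about backtracking of the ray but about cancellation between the group element and its inverse. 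Already in a free group the word-geodesic ray $a^k b a^{-k} b a^k \cdots$ has prefix $w=a^k b a^{-k}$ with $(w,w^{-1})_o = k$, unbounded; nothing in the Cannon coding or in Proposition~\ref{prop.mc} (which concerns maximal components and pressure, not Gromov products) rules this out for the points of your fellow traveller, so without a further argument \eqref{eq.tlgp} cannot be applied to the $\xi_k$ with uniform constants, in either unknown metric.

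The paper closes this gap by \emph{modifying the ray}: it invokes \cite[Lemma 3.11]{cantrell.reyes.man} to replace each Morse-lemma point $g_n'$ by a nearby element $g_n$ with $d_S(g_n,g_n')\le D_1$, $(g_n,g_n^{-1})_S\le D_1$ and $|\ell_S[g_n]-|g_n|_S|\le R_1$ (this is \eqref{eq.gp}); the adjusted sequence $(g_n)$ is still a uniform rough geodesic Hausdorff-close to the coding geodesic, and now \eqref{eq.gp1} transfers the Gromov-product bound to \emph{every} metric in $\Dc_\G$ simultaneously, so \eqref{eq.tlgp} gives $\ell_j[g_n]=d_j(o,g_n)+O(1)$ for both $d_1,d_2$ with constants independent of $n$ and of the conjugacy class. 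This adjustment (or some substitute proof of a uniform Gromov-product bound) is the missing ingredient in your proposal; with it, your $n\to\infty$ argument and your diagonal construction of the sparse subsequence would go through, but as written the key displayed identity and the linear-divergence claim are unjustified.
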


\begin{proof}
Take $\xi \in V$. Then, by construction there exists $z=(z_n)_{n=0}^\infty \in U$ such that $\ev_\ast(z) = \xi$. 
Since $z$ corresponds to a $d_S$ geodesic ray with endpoint $\x$, it also represents a $(\lambda,C)$-quasi-geodesic ray for $d_\mathfrak{g}$ for some $L,C >0$ depending only on $\mathfrak{g}$ and $S$. By the Morse Lemma there exists $D' > 0$ (depending only on $\mathfrak{g}$ and $S$) and a $D'$-rough geodesic ray for $d_\mathfrak{g}$ that fellow travels the $d_S$ geodesic $\{\ev_n(z)\}_{n\ge1}$. This rough geodesic, which we label $\{g'_k\}_{k \ge 1}$ remains within Hausdorff distance $K' >0$ (depending only on $d_\mathfrak{g}$ and $S$) of $\{\ev_n(z)\}_{n\ge1}$.

Now, by \cite[Lemma 3.11]{cantrell.reyes.man} 
there exist $D_1, R_1 >0$ such that for each $n\ge 1$ we can find a group element $g_n$ close to $g'_n$ such that
\begin{equation} \label{eq.gp}
d_S(g_n,g'_n) \le D_1, \ (g_n,g_n^{-1})_S \le D_1 \ \text{ and } \ |\ell_S[g_n] - |g_n|_S|\le R_1.
\end{equation}
It follows that $\{g_n\}_{n\ge 1}$ is a $D$-rough geodesic ray (for some $D>0$ depending only on $\mathfrak{g}$, $S)$ that we can choose to  start at $o$ and that ends at $\x = \ev_\ast(z) \in \partial \G$. Further there exists $K >0$ such that $\{ g_n \}_{n\ge 1}$ remains within Hausdorff distance $K$ of $\{\ev_n(z)\}_{n\ge1}$. We will show that $\{ [g_n] : n\ge 1\}$ is spectrally rigid. A similar proof shows that $\{ [g_n] : n\ge M\}$ is spectrally rigid for any fixed $M \ge 1$.

By construction $z$ spends its tail in one of the maximal components, say $B_i$. Suppose that $z$ enters $B_i$ on step $k$ (i.e. $\sigma^k(z) \in \Sigma_{B_i}$) and then stays in there forever. Further $z$ visits each cylinder in $B_i$ infinitely often.  
We will leverage this fact to show that  $\{ [g_n] : n\ge 1\}$ is spectrally rigid. Suppose that $\mathfrak{g}_1, \mathfrak{g}_2$ are two Riemannian metrics on $\G$ and write $\ell_1, \ell_2$ for the corresponding translation distance functions. We will show that, if $\ell_1, \ell_2$ agree on $\{ [g_n] : n\ge 1\}$ then there exists a constant $C >0$ (independent of $[x]$) such that
\[
|\ell_1[x] - \ell_2[x]| \le C \ \text{ for each } \ [x] \in \conj(\G).
\]
This implies that $\ell_1[x] = \ell_2[x]$ everywhere. 

Suppose that $\ell_1, \ell_2$ agree on conjugacy classes in $\{ [g_n] : n\ge 1\}$.
Now, fix a conjugacy class $[x] \in \conj(\G)$. By Proposition \ref{prop.loops} there exists an integer $M \ge 1$ such that there is a loop in  $B_i$ that represents $[x^{\pm M}]$. Suppose that this loop is described by the path $r_1, \ldots, r_l, r_1$ so that $\ev(r_1, \ldots, r_l, r_1) \in [x^{\pm M}]$. 
Since $z$ visits every cylinder infinitely often in $B_i$ (and in particular visits $[r_1,\ldots, r_l, r_1]$ infinitely often) we deduce that $z$ belongs to a cylinder of the form
\[
[ \ast = z_0, \ldots, z_k, p_1, \ldots, p_n, r_1, \ldots, r_l, r_1]
\]
for some path $p_1, \ldots, p_n$ in $B_i$.

We define $h_1 = \ev(z_0,\ldots, z_k,p_1,\ldots, p_n, r_1)$, $h_2 = \ev(z_0,\ldots, z_k,p_1,\ldots, p_n, r_1, \ldots, r_l, r_1) $ and let $d_1, d_2 \in \Dc_\G$ be the lifts of the metrics $\mathfrak{g}_1, \mathfrak{g}_2$ to $\G = \pi_1(M)$ as in Example \ref{ex.manifold}. Then note that, by the Morse Lemma
\[
d_j(o, h_2 ) = d_j(o,h_1) + d_j(o,\ev(r_1,\ldots,r_l,r_1)) + O(1) \ \text{ for $j=1,2$}
\]
where the implied constant is independent of $[x]$. 
Now, since $\{ g_n \}_{ n\ge 1}$ is within Hausdorff distance $K >0$ of $\{\ev_n(z)\}_{n\ge1}$ we deduce that there are $N_1, N_2 > 0$ such that
\[
d_\mathfrak{g}(h_1 , g_{N_1}) \le K \ \text{ and } \ d_\mathfrak{g}(h_2 , g_{N_2}) \le K.
\]
Further, by the Gromov product condition in (\ref{eq.gp}), equation (\ref{eq.tlgp}) and the fact that $d_1$ and $d_2$ have Gromov products comparable with that for $d_S$ by (\ref{eq.gp1}), we also have that, for $j=1, 2$,
\[
d_j(o, h_1) = \ell_j[g_{N_1}] + O(1) \ \text{ and } \ d_j(o, h_2) = \ell_j[g_{N_2}] + O(1)
\]
where the implied constants are independent of $[x]$. Also, since $r_1, \ldots, r_l, r_1$ corresponds to a loop in the Cannon coding, we have that
\[
(\ev(r_1, \ldots, r_l, r_1), \ev(r_1, \ldots, r_l, r_1)^{-1})_{o,d_S} = 0
\]
and so $d_j(o,\ev(r_1,\ldots,r_l,r_1)) = \ell_j[x^M] + O(1)$ by (\ref{eq.gp1}) and  (\ref{eq.tlgp}) where the error is independent of $[x]$.
Combining these expressions show that
\[
\ell_j[ g_{N_1}]  = \ell_j[g_{N_2}] + \ell_j[x^M] + O(1) \ \text{ for $j=1,2$}
\]
where the implied constant is independent of $[x]$. However we also have by assumption that $\ell_1[g_{N_1}] = \ell_2[g_{N_1}]$ and $\ell_1[g_{N_2}] = \ell_2[g_{N_2}]$ and so we deduce that
\[
\ell_1[x^M] = \ell_1[g_{N_1}] - \ell_1[g_{N_2}] + O(1) = \ell_2[g_{N_1}] - \ell_2[g_{N_2}] + O(1) = \ell_2[x^M] + O(1)
\]
where the implied error does not depend on $[x]$. This concludes the proof of the first part of the proposition.

For the furthermore part we note that to deduce that $\ell_1[x^M]  = \ell_2[x^M] + O(1)$ we need only know that $\ell_1[g_{N_1}] = \ell_2[g_{N_1}]$ and $\ell_1[g_{N_2}] = \ell_2[g_{N_2}]$. That is, for any $[x] \in \conj(\G)$ we can find a pair of elements $g_{N_1}([x]), g_{N_2}([x])$ such that if $\ell_1, \ell_2$ agree on these elements then $\ell_1[x^M]  = \ell_2[x^M] + O(1)$ where the error is independent of $[x]$. Since $z$ visits every cylinder  in $B_i$ infinitely often we can take $N_1,N_2$ to be as large as we like. This observation allows us to inductively construct a spectrally rigid set with growth rate slower than any prescribed function. Fix a function $f: \R_{>0} \to \R_{>0}$ with $f(T) \to \infty$ as $T\to \infty$. We begin by listing the conjugacy class in $\G$, $[x_1], [x_2], \ldots$ and for $[x_1]$ we find a pair  $g_N[x_1], g_{N'}[x_1]$ of group elements with word lengths $N[x_1], N'[x_1]$. We can take $N[x_1], N'[x_1]$ sufficiently large so that when we consider the set $E_1$ containing this pair of elements we have that $\#\{ [x] \in E_1 : \ell_\mathfrak{g}[x] < T \} \le f(T)$ for all $T >0$. We then do the same for $[x_2]$ but choose $g_N[x_2], g_{N'}[x_2]$ so that their word lengths $N[x_2], N'[x_2]$ are sufficiently large so that the set $E_2 = E_1 \cup \{ g_N[x_2], g_{N'}[x_2]\}$ has the property that $\#\{ [x] \in E_2 : \ell_\mathfrak{g}[x] < T \} \le f(T)$ for all $T >0$. Continuing inductively we obtain a set $E$ such that $\#\{ [x] \in E : \ell_\mathfrak{g}[x] < T \} \le f(T)$ for all $T >0$ and $E$ is spectrally rigid.
\end{proof}

We are now left to show that $\nu_\mathfrak{g}(V) = 1$. To prove this assertion we first need to develop a dynamical construction for the Patterson-Sullivan measure which we discussed in the introduction.


\subsection{A dynamical Patterson-Sullivan construction}\label{sec.ps}
In this section we show how to construct a Patterson-Sullivan measure for $d_\mathfrak{g}$ that is compatible with the Cannon coding $\Sigma$. As mentioned in the introduction, this construction is inspired by the work of Calegari and Fujiwara \cite{CalegariFujiwara2010}. Throughout this subsection we continue to use the notation established at the beginning of Section \ref{sec.proof}.

We begin by defining a sequence of measures
\[
\nu_n = \frac{\sum_{|x|_S \le n} \exp(-v_\mathfrak{g}d_\mathfrak{g}(o,x)) \delta_{\{x\}}}{\sum_{|x|_S \le n} \exp(-v_\mathfrak{g}d_\mathfrak{g}(o,x))}
\]
where $\d_{\{x\}}$ is the Dirac measure based at $x \in \G$. We can check that any weak star convergent subsequence $\nu$ of $\nu_n$ is supported on $\partial \G$ and belongs to the same measure class as the Patterson-Sullivan measures for $d_\mathfrak{g}$. Indeed, by direct computation we see that there exists $C>1$ such that for any $x \in \G$
\[
C^{-1} \exp(-v_\mathfrak{g} \beta_{o,\mathfrak{g}}(x,\x)) \le \frac{d x_\ast \n}{d\nu}(\x) \le C \exp(-v_\mathfrak{g} \beta_{o,\mathfrak{g}} (x,\x))
\]
where $\beta_{o,\mathfrak{g}}$ is the Busseman cocycle associated to $d_\mathfrak{g}$. This then implies $\nu$ is a Patterson-Sullivan measure for $d_\mathfrak{g}$ by \cite[Theorem 3.6]{THaus}.

We now consider the sequence of measures $\widehat{\nu}_n$ on the set of paths that start at $\ast$ in $\Sigma$ and visit the $0$ state (and so end in infinite repetition of $0$) that are defined by the relation $\nu_n(\{g\}) = \widehat{\nu}_n(\ev^{-1}\{g\})$ for $g \in \G$. Since $\ev$ is continuous we have the following.
\begin{lemma}\label{lem.wsc}
Any weak star convergent subsequence of $\widehat{\nu}_n$ pushes forward under $\ev_\ast$ to a Patterson-Sullivan measure for $d_\mathfrak{g}$ on $\partial \G$.
\end{lemma}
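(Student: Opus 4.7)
The plan is to reduce the lemma to two facts: that $\ev_\ast \widehat{\nu}_n = \nu_n$ for each $n$ by construction, and that since $\ev$ is continuous the operation $\ev_\ast$ preserves weak-star convergence. Once these are in hand, the conclusion follows from the already-indicated verification that weak-star limits of $\nu_n$ are Patterson-Sullivan measures.

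First I would verify the identity $\ev_\ast \widehat{\nu}_n = \nu_n$. The measure $\widehat{\nu}_n$ is supported on those paths starting at $\ast$ that eventually enter the absorbing $0$-state, and these are in bijection with $\G$ under $\ev$ (via the augmented Cannon coding). The defining relation $\widehat{\nu}_n(\ev^{-1}\{g\}) = \nu_n(\{g\})$ then gives $\ev_\ast \widehat{\nu}_n(A) = \widehat{\nu}_n(\ev^{-1}A) = \nu_n(A)$ for any Borel $A \subset \G \cup \partial \G$, so $\ev_\ast \widehat{\nu}_n = \nu_n$.

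Next I would use continuity of $\ev: \Sigma \to \G \cup \partial \G$ to transport weak-star limits. If $\widehat{\nu}_{n_k} \to \widehat{\nu}$ weak-star, then for any continuous bounded $\varphi$ on $\G \cup \partial \G$ the composition $\varphi \circ \ev$ is continuous and bounded on $\Sigma$, hence
\[
\int \varphi \, d\nu_{n_k} \;=\; \int \varphi \circ \ev \, d\widehat{\nu}_{n_k} \;\longrightarrow\; \int \varphi \circ \ev \, d\widehat{\nu} \;=\; \int \varphi \, d(\ev_\ast \widehat{\nu}).
\]
Thus $\nu_{n_k} \to \ev_\ast \widehat{\nu}$ weak-star on $\G \cup \partial \G$.

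Finally, the pre-lemma discussion already verifies that any weak-star limit $\nu$ of $\nu_n$ is supported on $\partial \G$ (the critical-exponent normalization forces the mass on any finite subset of $\G$ to vanish as $n \to \infty$) and satisfies the quasi-invariance estimate
\[
C^{-1} \exp(-v_\mathfrak{g} \beta_{o,\mathfrak{g}}(x,\xi)) \;\le\; \frac{dx_\ast \nu}{d\nu}(\xi) \;\le\; C \exp(-v_\mathfrak{g} \beta_{o,\mathfrak{g}}(x,\xi))
\]
for every $x \in \G$; \cite[Theorem 3.6]{THaus} then identifies $\nu$ as a Patterson-Sullivan measure for $d_\mathfrak{g}$. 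Applying this to $\nu = \ev_\ast \widehat{\nu}$ finishes the proof. There is no real obstacle here: the only substantive point is the interchange of pushforward with weak-star convergence, which is immediate from continuity of $\ev$, and the verification that escape-of-mass places the limit on $\partial \G$, which is standard for Poincar\'e-series constructions at the critical exponent.
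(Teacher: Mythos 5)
Your proposal is correct and follows essentially the same route as the paper, which proves the lemma simply by invoking the continuity of $\ev$ together with the preceding verification (via the quasi-conformal density estimate and \cite[Theorem 3.6]{THaus}) that weak-star limits of $\nu_n$ are Patterson-Sullivan measures. You merely make explicit the two implicit steps — the identity $\ev_\ast\widehat{\nu}_n=\nu_n$ and the fact that pushforward under a continuous map to the compact space $\G\cup\partial\G$ commutes with weak-star limits — which is exactly what the paper intends.
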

Fix a weak star limit $\widehat{\nu}$ of the sequence $\widehat{\nu}_n$ and note that $\widehat{\nu}$ is supported on $\Sigma_{[\ast]}$. We want to understand how $\widehat{\nu}$  assigns mass to cylinders. We know by definition that the value of $\widehat{\nu}[\ast, x_1, \ldots, x_{k-1}]$ is the limit along a subsequence of the following sequence
\begin{equation}\label{eq.ps}
\frac{\sum_{j=k}^n \sum_{x_{k-1}, y_1, \ldots, y_{n-j}} \exp(-v_\mathfrak{g}d_\mathfrak{g}(o,\ev(\ast, x_1,\ldots, x_{k-1}, y_1, \ldots, y_{n-j})))
}{\sum_{|x|_S \le n} \exp(-v_\mathfrak{g}d_\mathfrak{g}(o,x))}
\end{equation}
where the sum over $x_{k-1}, y_1, \ldots, y_{n-j}$ is over all $y_1, \ldots, y_{n-j}$ such that $x_{k-1}, y_1, \ldots, y_{n-j}$ is a path allowed by $A$. 
By Lemma 2.8 in \cite{cantrell.tanaka.1} there exists  $C > 1$ such that
\[
C^{-1} n \le \sum_{|x|_S \le n} \exp(-v_\mathfrak{g} d_{\mathfrak{g}}(o,x)) \le C n \ \ \text{ for each $n \ge 1$}
\]
and also, by the Morse Lemma (since $(\ast, x_1,\ldots, x_{k-1}, y_1, \ldots, y_{n-j})$ corresponds to a geodesic in the $S$ word metric), the distance $d_\mathfrak{g}(o,\ev(\ast, x_1,\ldots, x_{k-1}, y_1, \ldots, y_{n-j}))$ is equal to
\[
d_\mathfrak{g}(o,\ev(\ast, x_1,\ldots, x_{k-1}) + d_\mathfrak{g}(o,\ev(x_{k-1}, y_1, \ldots, y_{n-j}))
\]
up to a uniformly bounded additive constant independent of $k$ and $n$.
These inequalities imply that for each $n\ge 1$ the quantity $(\ref{eq.ps})$ is bounded above and below by uniformly bounded (away from $0$ and $\infty$) constants (depending only on $\mathfrak{g}$, $d_S$) multiplied by
\begin{equation}\label{eq.bound}
\exp(-v_\mathfrak{g}d_\mathfrak{g}(o, \ev(\ast, x_1, \ldots, x_{k-1}))) \left( \frac{1}{n} \sum_{j=k}^n \sum_{x_{k-1}, y_1, \ldots, y_{n-j}}  \exp(-v_\mathfrak{g} d_\mathfrak{g}(o,\ev(x_{k-1}, y_1, \ldots, y_{n-j})))\right).
\end{equation}
We then have the following. 
\begin{lemma}\label{lem.bounds}
If there is a path from $x_{k-1}$ into a word maximal component then there exists $C > 1$ such that
\[
C^{-1} \le \sum_{x_{k-1}, y_1, \ldots, y_{n-j}}  \exp(-v_\mathfrak{g} d_\mathfrak{g}(o,\ev(x_{k-1}, y_1, \ldots, y_{n-j}))) \le C
\]
for all $n,j \ge k$. If no such path exists then there are $0 < \theta < 1$ and $C' >0$ such that
\[
\sum_{x_{k-1}, y_1, \ldots, y_{n-j}}  \exp(-v_\mathfrak{g} d_\mathfrak{g}(o,\ev(x_{k-1}, y_1, \ldots, y_{n-j}))) \le C' \theta^{n-j}
\]
for all $n,j \ge k$.
\end{lemma}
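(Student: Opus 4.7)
The plan is to reduce both bounds to standard Ruelle--Perron--Frobenius estimates applied componentwise to the Cannon coding. By Lemma \ref{lem.holder}, for any extension $x \in \Sigma_A$ of the finite path $(x_{k-1}, y_1, \ldots, y_{n-j})$ one has $d_\mathfrak{g}(o, \ev(x_{k-1}, y_1, \ldots, y_{n-j})) = \P_\mathfrak{g}^{n-j}(x) + O(1)$ uniformly, so up to a bounded multiplicative factor the sum under study equals
\[
\sum_{\text{length-}(n-j) \text{ paths starting at } x_{k-1}} \exp\bigl(-v_\mathfrak{g}\,\P_\mathfrak{g}^{n-j}(x)\bigr),
\]
and the task becomes to analyse this dynamical sum.

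I would then decompose each such path by the sequence of strongly connected components $\Cc_0, \Cc_1, \ldots, \Cc_s$ that it visits, with $\Cc_0$ the component containing $x_{k-1}$. Since distinct components cannot be re-entered, $s+1$ is bounded by the total number $R$ of components of $\Gc$. H\"older continuity of $\P_\mathfrak{g}$ lets one replace the Birkhoff sum along each segment by one involving any convenient extension within that component, with only bounded multiplicative error at each of the at most $R$ transition points. Hence, restricted to a fixed component sequence and fixed transition times $0 \le t_1 < \cdots < t_s \le n-j$, the sum factors (up to bounded factors) as a product of segment sums of the shape $\sum_{\text{length-}l \text{ paths in } \Cc_i \text{ starting at a given vertex}} \exp(-v_\mathfrak{g}\,\P_\mathfrak{g}^{l})$. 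If $\Cc_i$ is word maximal, so that $\textnormal{P}_{\Cc_i}(-v_\mathfrak{g}\P_\mathfrak{g}) = 0$, the Gibbs property of the equilibrium state $\mu_{\Cc_i}$ on $\Sigma_{\Cc_i}$ identifies this segment sum with $\mu_{\Cc_i}\{x_0 = v\}$ up to bounded factors, hence bounds it above and below by positive constants independent of $l$. If $\textnormal{P}_{\Cc_i}(-v_\mathfrak{g}\P_\mathfrak{g}) < 0$, summing the second part of Lemma \ref{lem.rpf} over the finitely many possible endpoint vertices in $\Cc_i$ bounds the segment sum above by $C\theta^{l}$ for some $\theta \in (0,1)$.

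Assembling these estimates proves both parts. If $x_{k-1}$ reaches no word maximal component, every segment lies in a non-maximal one, so each individual configuration contributes $O(\theta^{n-j})$; summing over the polynomially many component sequences and transition-time configurations gives $C'\theta_1^{n-j}$ for any $\theta_1 \in (\theta, 1)$. If $x_{k-1}$ can reach a word maximal component, the dominant contributions come from configurations whose tail lies in such a component, each contributing $\Theta(1)$, while all other configurations are exponentially suppressed, yielding the uniform upper bound; the matching lower bound is obtained by restricting to paths that take one fixed initial route from $x_{k-1}$ into a chosen word maximal component and then remain there, then applying the Gibbs lower bound to the resulting maximal-component segment. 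The main technical obstacle will be justifying the componentwise factorisation: because $\P_\mathfrak{g}$ is not locally constant, H\"older control is essential to keep the accumulated errors at the boundedly many transition points a bounded multiplicative constant.
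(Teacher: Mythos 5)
Your outline is essentially correct and would produce a complete proof, but it takes a partly different route from the paper. The paper settles the upper bound in the first case by citing the counting estimate $\sum_{|x|_S\le n}\exp(-v_\mathfrak{g}d_\mathfrak{g}(o,x))\asymp n$ (equivalently the sphere version) from \cite[Lemma 2.8]{cantrell.tanaka.1}, and uses Lemma \ref{lem.rpf} only for the lower bound and for the decaying estimate: for the lower bound it fixes one path from $x_{k-1}$ into a word maximal component and compares with the RPF preimage sums $\sum_{\sigma^{n-j}(x)=y^w}\exp(-v_\mathfrak{g}\P_\mathfrak{g}^{n-j}(x))$ there, which is exactly your lower-bound step with the preimage sums playing the role you give to the Gibbs property of the equilibrium state. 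Your itinerary decomposition replaces the external counting input by a self-contained dynamical computation; the trade-off is more bookkeeping (transition times, splicing errors controlled by H\"older continuity, degenerate single-vertex components), all of which is routine.

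One ingredient must be made explicit, and it is exactly where your phrase ``all other configurations are exponentially suppressed'' is too loose. A configuration whose maximal segment has length $l$ and whose non-maximal segments have total length $q$ (so $l+q=n-j$ up to the boundedly many transition steps) contributes $O(\theta^{q})$, which is \emph{not} small in $n-j$ when $q$ is small; the uniform bound comes from summing $\mathrm{poly}(q)\,\theta^{q}$ over $q$, and the count of configurations with given $q$ is polynomial in $q$ (rather than in $n-j$) only because any path meets at most one word maximal component: word maximal components are pairwise non-connected, \cite[Lemma 4.10]{CalegariFujiwara2010}, as recorded in Section \ref{sec.cc}. If two maximal components could be chained, the number of ways to split the maximal time between them would already be of order $n-j$, each such configuration would contribute a constant, and the sum would genuinely grow (consistent with the linear growth of the ball sums), so this disjointness is essential rather than cosmetic. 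With that fact invoked, and the summation organised by total non-maximal time as above, your upper bound argument closes and the rest of the proposal matches the paper's proof in substance.
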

In the proof of this lemma and moving forward, $\P_\mathfrak{g}$ will denote the H\"older continuous function associated to $d_\mathfrak{g}$ coming from Lemma \ref{lem.holder}.
\begin{proof}
For the first statement the upper bound follows from \cite[Lemma 2.8]{cantrell.tanaka.1}. For the lower bound, suppose that there is a path from $x_{k-1}$ into the word maximal component $B_i$. Suppose $B_i$ has $l$ states. Choose elements $y^1,\ldots, y^l \in \Sigma_{B_i}$ with each initial state $y^j_0$  being different (i.e. each state in $B_i$ is an initial state for one of the $y^j$ and vice versa).  Then, since $\P_\mathfrak{g}$ is  the function from Lemma \ref{lem.holder} (and so is H\"older continuous), there exists $C >0$ such that
\[
\sum_{x_{k-1}, y_1, \ldots, y_{n-j}}  \exp(-v_\mathfrak{g} d_\mathfrak{g}(o,\ev(x_{m-1}, y_1, \ldots, y_{n-j}))) \ge C \min_{w=1,\ldots, l} \sum_{\sigma^{n-j}(x) = y^w} \exp(-v_\mathfrak{g} \P^{n-j}_\mathfrak{g}(x)).
\]
The required bound then follows from the Lemma \ref{lem.rpf}.
The second statement follows from Lemma \ref{lem.rpf} similarly.
\end{proof}

We now want to understand how $\widehat{\nu}$ assigns mass to different components in the Cannon coding. As an initial observation we note that by Lemma \ref{lem.holder}, Lemma \ref{lem.bounds} and expression (\ref{eq.bound}),  if $x = (x_n)_{n=0}^\infty \in \Sigma_{[\ast]}$ and there is a path from $x_{k-1}$ into a word maximal component then there exists $C >1$ (independent of $x$) such that,
 \begin{equation}\label{eq.cylinder1}
C^{-1}\exp(-v_\mathfrak{g}\P_\mathfrak{g}^m(x)) \le \widehat{\nu}[\ast, x_1, \ldots, x_{k-1}] \le C\exp(-v_\mathfrak{g}\P_\mathfrak{g}^m(x)). 
\end{equation}
We also have the following result.
\begin{lemma}\label{lem.nl}
We have that $\widehat{\nu}$ almost every $x\in\Sigma_{[\ast]}$ enters a word maximal component and never leaves.
\end{lemma}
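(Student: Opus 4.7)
\textbf{Plan for Lemma~\ref{lem.nl}.} My plan is to decompose $\Sigma_{[\ast]}$ according to which component a path eventually settles in, and then show that every component which is not word maximal is $\widehat{\nu}$-null as a limit sink. Because the connected components of the Cannon graph are strongly connected (in the sense of the paper) and the induced quotient graph is a finite DAG, every $x \in \Sigma_{[\ast]}$ eventually stays inside a single component. Let $A_C$ denote the set of $x \in \Sigma_{[\ast]}$ eventually contained in the component $C$, so that $\Sigma_{[\ast]} = \bigsqcup_C A_C$ (including $C = \{0\}$). Distinct word maximal components $B_j$ are mutually unreachable by \cite[Lemma 4.10]{CalegariFujiwara2010}, so once a path settles in some $B_j$ it remains there forever; the lemma therefore reduces to showing $\widehat{\nu}(A_C) = 0$ whenever $C$ is not one of the $B_j$.

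Two of the cases are short. For $C = \{0\}$, the set $A_{\{0\}}$ is a countable union of singleton cylinders $[\ast, x_1, \dots, x_{k-1}, 0]$, and the linear lower bound $\sum_{|y|_S \le n} e^{-v_\mathfrak{g} d_\mathfrak{g}(o,y)} \gtrsim n$ from \cite[Lemma~2.8]{cantrell.tanaka.1} yields $\widehat{\nu}_n([\ast, x_1, \dots, x_{k-1}, 0]) \lesssim 1/n$, so each such cylinder is $\widehat{\nu}$-null and $\widehat{\nu}(A_{\{0\}}) = 0$ by countable subadditivity. If instead $C$ is a non-maximal component that cannot reach any word maximal component, then every cylinder whose terminal vertex lies in $C$ is already $\widehat{\nu}$-null by the second statement of Lemma~\ref{lem.bounds} inserted into~(\ref{eq.bound}), and $\widehat{\nu}(A_C) = 0$ by countable subadditivity again.

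The main case is a non-maximal component $C$ that does reach some word maximal component. Set $E_k = \{x : x_j \in C \text{ for all } j \ge k\}$, so $A_C = \bigcup_k E_k$, and approximate $E_k$ from above by the clopen sets $F_N = \{x : x_j \in C \text{ for } k \le j \le N\}$, for which $\widehat{\nu}(E_k) = \lim_N \widehat{\nu}(F_N)$ by continuity of measure. Applying the upper bound in~(\ref{eq.cylinder1}) to each defining cylinder of $F_N$, using Lemma~\ref{lem.holder}, and splitting the Birkhoff sum $\P_\mathfrak{g}^N$ at time $k-1$, I obtain
\[
\widehat{\nu}(F_N) \;\lesssim\; \sum_{\substack{(\ast, x_1, \ldots, x_N): \\ x_j \in C \text{ for } k \le j \le N}} e^{-v_\mathfrak{g} \P_\mathfrak{g}^N(x)},
\]
and the right-hand side factors as a finite sum over the first $k-1$ symbols times a sum of $e^{-v_\mathfrak{g}\P_\mathfrak{g}^{N-k+1}}$ over length-$(N{-}k{+}1)$ paths in $\Sigma_C$. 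For this latter sum, Proposition~\ref{prop.mc} gives $\textnormal{P}_C(-v_\mathfrak{g}\P_\mathfrak{g}) < 0$, and combining the Gibbs property~(\ref{eq:gibbs}) of $\mu_{C, -v_\mathfrak{g}\P_\mathfrak{g}}$ with the fact that cylinder measures of $\mu_C$ sum to $1$ yields $\sum_{\text{length-}m\text{ paths in }\Sigma_C} e^{-v_\mathfrak{g}\P_\mathfrak{g}^m} \asymp e^{m\,\textnormal{P}_C(-v_\mathfrak{g}\P_\mathfrak{g})}$. Hence $\widehat{\nu}(F_N)$ decays exponentially in $N$, forcing $\widehat{\nu}(E_k) = 0$ and therefore $\widehat{\nu}(A_C) = 0$.

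The main obstacle is this exponential decay of $\widehat{\nu}(F_N)$. It relies crucially on Proposition~\ref{prop.mc}: without the identification of word-maximal and $-v_\mathfrak{g}\P_\mathfrak{g}$-maximal components, a non-word-maximal $C$ could \emph{a priori} still satisfy $\textnormal{P}_C(-v_\mathfrak{g}\P_\mathfrak{g}) = 0$, and the Gibbs-based estimate would collapse. That identification is precisely what supplies the thermodynamic spectral gap needed to close the argument.
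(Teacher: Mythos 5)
Your argument is correct, and it runs on the same two engines as the paper's proof, but it is organised differently enough to be worth comparing. The paper splits the bad event into ``enters and then leaves a word maximal component'' and ``never enters a word maximal component'': the first is a countable union of cylinders whose terminal vertex cannot reach a maximal component, hence null by Lemma \ref{lem.bounds} and (\ref{eq.bound}); the second is killed by a single global estimate, namely that the total weight $\sum \exp(-v_\mathfrak{g}d_\mathfrak{g}(o,\ev(\ast,x_1,\ldots,x_n)))$ over \emph{all} length-$n$ paths avoiding the maximal components decays like $\theta^n$ (via Lemma \ref{lem.rpf} and negativity of the pressure off the word maximal components), which combined with (\ref{eq.cylinder1}) gives measure zero. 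You instead partition $\Sigma_{[\ast]}$ by the component in which a path eventually settles (using that the strongly connected components form a finite DAG, so every infinite path stabilises in a unique cycle-containing component), treat the $\{0\}$ sink by the direct $1/n$ bound, treat components that cannot reach a word maximal component exactly as the paper treats its first case, and for a non-maximal $C$ that can reach a maximal component you run the exponential-decay argument \emph{inside} $C$ only, deriving $\sum_{\text{length-}m\text{ words in }\Sigma_C} e^{-v_\mathfrak{g}\P_\mathfrak{g}^m} \asymp e^{m\,\textnormal{P}_C(-v_\mathfrak{g}\P_\mathfrak{g})}$ from the Gibbs property (\ref{eq:gibbs}) rather than from Lemma \ref{lem.rpf}; these are equivalent thermodynamic inputs, and in both proofs the decisive fact is Proposition \ref{prop.mc}, which you correctly identify as supplying the strict pressure gap. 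What your route buys is that the exponential estimate never has to be assembled across several non-maximal components and transient vertices at once (your tail is confined to a single $C$, with the first $k$ symbols contributing only a fixed constant, and continuity from above finishing the job); what it costs is the extra bookkeeping of the eventual-component decomposition, and two small points you should make explicit: that every admissible finite word in $C$ extends to a point of $\Sigma_C$ (needed to invoke the Gibbs property, and true since $C$ is strongly connected and contains a cycle), and that the upper bound in (\ref{eq.cylinder1}) applies to the cylinders defining $F_N$ because the terminal vertex lies in $C$ and hence can reach a maximal component. Neither is a gap, just a line each to add.
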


\begin{proof}
Substituting the bounds from Lemma \ref{lem.bounds} into expression (\ref{eq.bound}) shows that, if there is no path from $x_{k-1}$ into a maximal component, then
\[
\widehat{\nu}[\ast, x_1, \ldots, x_{k-1}] = 0.
\]
Summing this equality over (the countably many) paths that enter and eventually leave a word maximal component shows that
\[
\widehat{\nu}( x \in \Sigma_{[\ast]}  : \text{ $x$ enters and leaves a word maximal component} ) = 0.
\]

Further, since the pressure of $-v_\mathfrak{g}\P_\mathfrak{g}$ is strictly less than zero on the non word maximal components, it follows from Lemma \ref{lem.rpf} that there exist $C>0, 0<\theta<1$ such that
\[
\sum_{(\ast,x_1,\ldots,x_n) } \exp(-v_\mathfrak{g}d_\mathfrak{g}(o,\ev(\ast,x_1,\ldots,x_n))) \le C \theta^n \ \ \text{ for all $n\ge 1$}
\]
where the sum is over all paths in $\Sigma_{[\ast]}$ that do not enter a maximal component. It then follows from $(\ref{eq.cylinder1})$ that 
\[
\widehat{\nu}( x \in \Sigma_{[\ast]}  : \text{ $x$ never enters a word maximal component} ) = 0.
\]
Combining this with the fact that $\widehat{\nu}$ gives zero mass to the sequences that enter then leave a maximal component, we deduce that
\[
\widehat{\nu}( x \in \Sigma_{[\ast]}  : \text{ $x$ does not spend infinite time in a word maximal component} ) = 0
\]
and this concludes the proof.
\end{proof}

Following the same calculations and reasoning (considering the pressure of $\P_\mathfrak{g}$) we see that there exists $C >1$ depending only on $x_0$ such that, if $x \in \Sigma$ has $x_0, x_{k-1}$ belonging to a word maximal component  and there exists a path of length $j$ from $\ast$ to $x_0$ then
\begin{equation}\label{eq.cylinder}
C^{-1} \exp(-v_\mathfrak{g} \P_\mathfrak{g}^k(x))  \le \sigma_\ast^j\widehat{\nu}[x_0, \ldots, x_{k-1}] \le C \exp(-v_\mathfrak{g} \P_\mathfrak{g}^k(x)).
\end{equation}
If $x_0$ and $x_{k-1}$ do not both belong to a word maximal component then $\sigma_\ast^j\widehat{\nu}[x_0, \ldots, x_{k-1}] = 0$.
This observation helps us to deduce the following.

\begin{lemma}\label{lem.ac}
Suppose that $x_0$ belongs to a word maximal component $B_i$ and that there exists a path of length $j$ from $\ast$ to $x_0$. Then
$\sigma_\ast^j \widehat{\nu}|_{[x_0]}$ and $\mu^i_{-v_\mathfrak{g} \P_\mathfrak{g}}|_{[x_0]}$ are mutually absolutely continuous where $\mu^i_{-v_\mathfrak{g} \P_\mathfrak{g}}$ is the equilibrium state/Gibbs measure for $-v_\mathfrak{g}\P_\mathfrak{g}$ on the component $\Sigma_{B_i}$.
\end{lemma}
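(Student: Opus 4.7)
The plan is to reduce the statement to a comparison of the two measures on cylinder sets inside $[x_0]$, and then invoke a standard measure-theoretic argument to upgrade this to mutual absolute continuity. Both ingredients are already essentially in hand from earlier in the paper.

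First, I would apply the Gibbs property of $\mu^i_{-v_\mathfrak{g}\P_\mathfrak{g}}$ given by Proposition \ref{prop.vp}. Since $B_i$ is word maximal we have $\textnormal{P}_{B_i}(-v_\mathfrak{g}\P_\mathfrak{g}) = 0$ by Proposition \ref{prop.mc} together with Lemma \ref{lem.holder}, so the Gibbs inequality reduces to
\[
c^{-1}\exp\bigl(-v_\mathfrak{g}\P_\mathfrak{g}^k(x)\bigr) \le \mu^i_{-v_\mathfrak{g}\P_\mathfrak{g}}[x_0, x_1, \ldots, x_{k-1}] \le c\exp\bigl(-v_\mathfrak{g}\P_\mathfrak{g}^k(x)\bigr)
\]
for every $x \in [x_0, \ldots, x_{k-1}] \subset \Sigma_{B_i}$, with $c$ depending only on $B_i$. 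On the other hand, equation (\ref{eq.cylinder}) gives exactly the same two-sided bound for $\sigma^j_\ast \widehat{\nu}[x_0, \ldots, x_{k-1}]$ (with the constant depending only on $x_0$), precisely because $x_0$ lies in a word maximal component and we are using the Busemann-cocycle potential $\P_\mathfrak{g}$ of Lemma \ref{lem.holder}. Combining the two bounds and eliminating the common exponential factor yields a single constant $C_1 > 1$ (depending on $B_i$ and $x_0$) such that
\[
C_1^{-1} \le \frac{\sigma^j_\ast \widehat{\nu}[x_0, x_1, \ldots, x_{k-1}]}{\mu^i_{-v_\mathfrak{g}\P_\mathfrak{g}}[x_0, x_1, \ldots, x_{k-1}]} \le C_1
\]
for every admissible cylinder $[x_0, \ldots, x_{k-1}] \subset \Sigma_{B_i}$ and every $k \ge 1$.

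To finish, I would pass from cylinders to arbitrary Borel sets by the standard approximation argument. Cylinders inside $[x_0]$ form a semi-algebra that generates the Borel $\sigma$-algebra of $[x_0]$, and both measures are finite on $[x_0]$, so given any Borel $E \subseteq [x_0]$ with $\mu^i_{-v_\mathfrak{g}\P_\mathfrak{g}}(E) = 0$ and any $\varepsilon > 0$ I can cover $E$ by a countable disjoint union of cylinders $\{[y^{(n)}_0, \ldots, y^{(n)}_{k_n-1}]\}$ whose $\mu^i_{-v_\mathfrak{g}\P_\mathfrak{g}}$-mass totals at most $\varepsilon$; the two-sided bound above then gives $\sigma^j_\ast \widehat{\nu}(E) \le C_1 \varepsilon$, and letting $\varepsilon \to 0$ yields $\sigma^j_\ast \widehat{\nu}(E) = 0$. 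The reverse implication is symmetric. Hence the two measures are mutually absolutely continuous on $[x_0]$.

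There is really no major obstacle in this lemma: the content has already been done in establishing the Gibbs bounds (\ref{eq.cylinder}) for $\widehat{\nu}$ in the previous subsection, and once one observes that the reference quantity $\exp(-v_\mathfrak{g}\P_\mathfrak{g}^k(x))$ on cylinders is the same for both measures, the conclusion is immediate. The only small point requiring care is to confirm that the constant in (\ref{eq.cylinder}) is uniform over all cylinders inside $[x_0]$ and not just over, say, an initial segment of lengths — but this uniformity is already built into the statement recorded in (\ref{eq.cylinder}).
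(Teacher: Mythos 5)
Your proposal is correct and follows essentially the same route as the paper: both compare the two measures on cylinders inside $[x_0]$ via the Gibbs bounds (equation (\ref{eq.cylinder}) for $\sigma_\ast^j\widehat{\nu}$ and Proposition \ref{prop.vp} with $\textnormal{P}_{B_i}(-v_\mathfrak{g}\P_\mathfrak{g})=0$ for $\mu^i_{-v_\mathfrak{g}\P_\mathfrak{g}}$), and then upgrade to mutual absolute continuity by approximating Borel subsets of $[x_0]$ by disjoint unions of cylinders. The only cosmetic difference is that the paper invokes Vitali's covering lemma (following \cite[Theorem 3.6]{THaus}) where you use outer regularity and disjointification of cylinders, which works equally well here since base-$0$ cylinders are nested or disjoint.
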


\begin{proof}
Equation $(\ref{eq.cylinder})$ tells us that $\sigma_\ast^j \widehat{\nu}|_{[x_0]}$ satisfies the Gibbs property $(\ref{eq:gibbs})$ on cylinders contained in $[x_0]$. Since $\mu^i_{-v_\mathfrak{g} \P_\mathfrak{g}}|_{[x_0]}$ does too and any measurable subset of $[x_0]$ can be approximated by a disjoint union of cylinders contained in $[x_0]$, the result follows. To be more precise, we need to use
Vitali's covering lemma, see the proof of  \cite[Theorem 3.6]{THaus} for this argument.
\end{proof}

The important consequence of this result is the following.

\begin{lemma}\label{lem.meascomp}
Suppose that $U \subset \Sigma_{[\ast]}$ has the property that $\widehat{\nu}(U) >0$ then there exists a word maximal component $B_i$ and integer $j \ge 1$ such that $\mu^i_{-v_\mathfrak{g} \P_\mathfrak{g}}(\sigma^j(U)) > 0$.
\end{lemma}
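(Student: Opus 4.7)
The strategy is to partition $U$ according to which word maximal component its elements eventually enter and at what time, and then to reduce to a single cylinder where Lemma~\ref{lem.ac} applies.

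By Lemma~\ref{lem.nl}, after discarding a $\widehat{\nu}$-null subset, every $x \in U$ enters some word maximal component $B_i$ at some positive time and remains inside it forever. For each $i \in \{1, \ldots, k\}$ and each $j \ge 1$ I would define
\[
U_{i, j} = \{ x \in U : \sigma^j(x) \in \Sigma_{B_i} \text{ and } j \text{ is the smallest such index} \},
\]
so that $U = \bigsqcup_{i, j} U_{i, j}$ up to a $\widehat{\nu}$-null set. This is a countable disjoint decomposition of a set of positive $\widehat{\nu}$-measure, so some $U_{i, j}$ must satisfy $\widehat{\nu}(U_{i, j}) > 0$.

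Next I would further split $U_{i, j}$ according to the state visited at time $j$: for each vertex $v$ of $B_i$ put $U_{i, j, v} = \{ x \in U_{i, j} : x_j = v \}$. Since $B_i$ has finitely many vertices, some $U_{i, j, v}$ has positive $\widehat{\nu}$-measure, and since this set is non-empty there exists a length-$j$ path in $\Gc$ from $\ast$ to $v$, so the hypotheses of Lemma~\ref{lem.ac} are satisfied. Because $\sigma^{-j}(\sigma^j(U_{i, j, v})) \supseteq U_{i, j, v}$, we have
\[
\sigma^j_\ast \widehat{\nu} \bigl( \sigma^j(U_{i, j, v}) \bigr) \ge \widehat{\nu}(U_{i, j, v}) > 0,
\]
and since $\sigma^j(U_{i, j, v}) \subseteq [v]$, the mutual absolute continuity supplied by Lemma~\ref{lem.ac} forces $\mu^i_{-v_\mathfrak{g} \P_\mathfrak{g}}(\sigma^j(U_{i, j, v})) > 0$. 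As $\sigma^j(U) \supseteq \sigma^j(U_{i, j, v})$, the conclusion $\mu^i_{-v_\mathfrak{g} \P_\mathfrak{g}}(\sigma^j(U)) > 0$ follows.

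The only genuine subtlety is measurability: the forward image $\sigma^j(U)$ is a continuous image of a Borel subset of the Polish space $\Sigma$, hence analytic and therefore universally measurable, so the Gibbs measure does assign it a value; alternatively one can simply record the Borel subset $\sigma^j(U_{i,j,v})$ as the witness to positive measure. Beyond this bookkeeping the argument is essentially a countable pigeonhole combined with the Gibbs comparison of Lemma~\ref{lem.ac}, and neither step presents any real difficulty.
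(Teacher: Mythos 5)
Your argument is correct and follows essentially the same route as the paper's proof: decompose $U$ via Lemma~\ref{lem.nl} according to the first entry time into a word maximal component, pigeonhole to find a positive-measure piece landing in a fixed cylinder $[v]$, and transfer positivity through the mutual absolute continuity of Lemma~\ref{lem.ac}. Your extra care about the pigeonhole over initial states and the universal measurability of the analytic set $\sigma^j(U)$ only makes explicit points the paper leaves implicit (though note $\sigma^j(U_{i,j,v})$ itself need not be Borel, so the analytic-set remark, not the ``Borel witness'' alternative, is the right fix).
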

\begin{proof}
For each $k\ge1$ define the set $U_k$ to be the collection of $x \in U$ that enter a word maximal component precisely on their $k$th step and never leaves this component, i.e. $\sigma^k(x) \in \Sigma_{B_i}$ for some $i$ but $\sigma^{j}(x)$ does not belong to any $B_i$ for $j < k$. From Lemma \ref{lem.nl}
\[
\widehat{\nu} \left(U \backslash \bigcup_{k \ge 1} U_k\right) = 0
\]
and so there exists $k$ with $\widehat{\nu}(U_k) >0$. In particular there is a word maximal component $B_i$ and $j \ge 1$ such that
$\widehat{\nu}(\sigma^{-j}(V)) > 0$
where $V =\sigma^{j}(U_j)\cap  \Sigma_{B_i}$.
However by Lemma \ref{lem.ac} there exist $c >0$ and $[x_0] \subset B_i$ with
\[
\mu^i_{-v_\mathfrak{g} \P_\mathfrak{g}}|_{[x_0]}(V) \ge c \cdot \sigma_\ast^j \widehat{\nu}|_{[x_0]}(V) = \widehat{\nu}(\sigma^{-j}(V)\cap [x_0]) >0
\]
as required.
\end{proof}
We immediately deduce the following. 
\begin{corollary}\label{cor.measure}
Suppose that $U \subset \Sigma$ is a set such that $\mu^i_{-v_\mathfrak{g} \P_\mathfrak{g}}(U) =1$ for each word maximal component $B_i$. Then,
\[
\widehat{\nu}\left( \Sigma_{[\ast]} \cap \bigcup_{j=1}^{\infty} \sigma^{-j}(U)\right)  = 1.
\]
\end{corollary}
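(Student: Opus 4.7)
The plan is to deduce this corollary directly from Lemma \ref{lem.meascomp} by a straightforward contradiction argument. Let $W = \Sigma_{[\ast]} \cap \bigcup_{j \ge 1} \sigma^{-j}(U)$ and suppose, for contradiction, that $\widehat{\nu}(\Sigma_{[\ast]} \setminus W) > 0$. Applying Lemma \ref{lem.meascomp} to the set $\Sigma_{[\ast]} \setminus W$ produces a word maximal component $B_i$ and some $j \ge 1$ with $\mu^i_{-v_\mathfrak{g}\P_\mathfrak{g}}(\sigma^j(\Sigma_{[\ast]} \setminus W)) > 0$. On the other hand, by the very definition of $W$, every $x \in \Sigma_{[\ast]} \setminus W$ satisfies $\sigma^l(x) \notin U$ for all $l \ge 1$; specialising to $l=j$ gives $\sigma^j(\Sigma_{[\ast]} \setminus W) \subset \Sigma \setminus U$, so that $\mu^i_{-v_\mathfrak{g}\P_\mathfrak{g}}(\Sigma \setminus U) > 0$. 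This contradicts the hypothesis $\mu^i_{-v_\mathfrak{g}\P_\mathfrak{g}}(U) = 1$, and the corollary follows.

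The only subtlety I anticipate is a measurability check: the forward image $\sigma^j(\Sigma_{[\ast]} \setminus W)$ need not be Borel a priori. However, since $\sigma$ restricted to any cylinder of length at least $j+1$ is a homeomorphism onto its image, $\sigma^j$ maps Borel sets to countable unions of Borel sets, so this image is $\mu^i_{-v_\mathfrak{g}\P_\mathfrak{g}}$-measurable. Alternatively, one can decompose $\Sigma_{[\ast]} \setminus W$ according to its length-$j$ prefix into countably many sets on each of which $\sigma^j$ is injective and open, and apply the argument on each piece, avoiding the issue entirely. I expect no further obstacles, since all the heavy lifting — the Gibbs bound translating $\widehat{\nu}$ mass into $\mu^i_{-v_\mathfrak{g}\P_\mathfrak{g}}$ mass on cylinders inside maximal components — has already been carried out in Lemmas \ref{lem.nl}, \ref{lem.ac}, and \ref{lem.meascomp}.
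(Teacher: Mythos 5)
Your argument is correct and is essentially identical to the paper's own proof: both take the complement of $\Sigma_{[\ast]} \cap \bigcup_{j\ge 1}\sigma^{-j}(U)$, apply Lemma \ref{lem.meascomp} to it, and derive the contradiction $\mu^i_{-v_\mathfrak{g}\P_\mathfrak{g}}(U^c) > 0$ from the inclusion of the forward image in $\Sigma \setminus U$. The measurability remark is a harmless addition that the paper leaves implicit.
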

\begin{proof}
Let
\[
\widetilde{U} = \left(\Sigma_{[\ast]} \cap \bigcup_{j=1}^{\infty} \sigma^{-j}(U)\right)^{c} \ \text{(the power $c$ represents taking the complement).}
\]
Then if $\widehat{\nu}(\widetilde{U} ) >0$ we can apply Lemma \ref{lem.meascomp} to deduce that there is $B_i$ and $j \ge 1$ with $\mu^i_{-v_\mathfrak{g} \P_\mathfrak{g}}(\sigma^j(\widetilde{U} )) > 0$. However this implies that $\mu^i_{-v_\mathfrak{g} \P_\mathfrak{g}}(U^{c}) > 0$ contrary to our assumption.
\end{proof}

\begin{remark}\label{rem.busseman}
We note that the same construction used in this section can be applied to any metric $d \in \Dc_\G$ that has a H\"older continuous Busseman cocycle. We will use this observation when we present generalisations of our main result in Section 
\ref{sec.wm}.
\end{remark}


\subsection{Concluding the proof}\label{sec.conclude}

We are now ready to conclude the proof of our main result. Let $\widehat{\nu}$ be the measure constructed in the previous section and let the sets $V, U, \widehat{U}$ and  $\widehat{U}_j$ for $j=1,\ldots,k$ be those constructed in Section \ref{sec.set}.

\begin{proof}[Proof of Theorem \ref{thm.ae}]
By Proposition \ref{prop.ssr} there is $D>0$ such that if $\x \in V$ then there exists a $(1,D)$-rough geodesic (for $d_\mathfrak{g}$) representative for $\x$ that satisfies the sparse spectrally rigid property stated in the theorem. It therefore suffices to show that $
\nu_\mathfrak{g}(V) = 1$ to conclude the proof.

Recall that, in the previous section, we used $\mu^j_{-v_\mathfrak{g}\P_\mathfrak{g} }$ to denote the equilibrium state for $-v_\mathfrak{g}\P_\mathfrak{g} $ on the component $B_j$. 
By the Poincar\'e Recurrence Theorem (which we can apply since Gibbs measure are shift invariant) have that $\mu^j_{-v_\mathfrak{g} \P_\mathfrak{g}}(\widehat{U}_j) =1$ for each maximal component $B_j$. Hence we know that $\widehat{\nu}$ almost every $x \in \Sigma_{[\ast]}$ belongs to 
\[
 U = \Sigma_{[\ast]} \cap \bigcup_{l \ge 1}  \sigma^{-l}\left(\widehat{U}\right)
\]
by Corollary \ref{cor.measure}.
The theorem now follows from Lemma \ref{lem.wsc} as this lemma implies that $\nu_\mathfrak{g}(V) = \widehat{\nu}(\ev_\ast^{-1}(V)) \ge \widehat{\nu}(U) = 1$.
\end{proof}


\section{Generalisations}\label{sec.wm}

In this section we show that Theorem \ref{thm.ae} holds when we swap $d_\mathfrak{g}$ for a word metric.
In fact we can prove a more general rigidity result for a large collection of metrics $\overline{\Dc}_\G$ on 
$\G$. Given a hyperbolic group $\G$, the space $\overline{\Dc}_\G$ consists of all left invariant (pseudo-)metrics $d$ on $\G$ that have non-constant translation length function and for which there exist $\lambda > 0$ and $d_0 \in \Dc_\G$ such that
\[
(x,y)_{o,d} \le \lambda (x,y)_{o,d_0} + \lambda
\]
for all $x,y \in \G$. This space was introduced in \cite{cantrell.reyes.man} and clearly $\overline{\Dc}_\G$ contains $\Dc_\G$. We say that a  subset of conjugacy classes $E \subset \conj(\G)$ is spectrally rigid for $\overline{\Dc}_\G$ if when $d,d_\ast \in \overline{\Dc}_\G$ have the same marked length spectrum on $E$ then they are roughly similar (or equivalently have the same marked length spectrum up to scaling \cite[Theorem 1.2]{cantrell.tanaka.1}). We note that this space includes interesting examples of non-proper metrics on $\G$ and we have the following result.
\begin{theorem}[Theorem 1.6 of \cite{cantrell.reyes.man}] \label{thm.examps}
The following actions give rise to points in $\overline{\Dc}_\G$. 
\begin{enumerate}
    \item Actions on coned-off Cayley graphs for finite, symmetric generating sets, where we cone-off a finite number of quasi-convex subgroups of infinite index.
    \item Non-trivial Bass-Serre tree actions with quasi-convex edge stabilizers of infinite index. More generally, cocompact actions on $\textnormal{CAT}(0)$ cube complexes with quasi-convex hyperplane stabilizers and without global fixed points.
    \item Small actions on $\R$-trees, when $\G$ is a surface group or a free group.
\end{enumerate}
\end{theorem}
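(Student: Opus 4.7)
The plan is to verify the three defining properties of membership in $\overline{\Dc}_\G$ — left-invariance, non-constant translation length function, and the Gromov product inequality against some reference metric $d_0 \in \Dc_\G$ — for each of the three classes of actions. In every case, I would realise the pseudo-metric on $\G$ as an orbit pseudo-metric $d(x,y) = d_X(x\cdot v, y\cdot v)$ for $X$ the relevant target space (coned-off Cayley graph, Bass--Serre tree or CAT(0) cube complex, or $\R$-tree) and $v \in X$ a basepoint. Left-invariance is then automatic from the fact that $\G$ acts on $X$ by isometries. I would take $d_0 = d_S$, a word metric for a finite symmetric generating set $S$ of $\G$.

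For non-constancy of $\ell_d$, the strategy is to exhibit a loxodromic/hyperbolic element of $\G$ for the action on $X$ in each case: in (1), any element of $\G$ not conjugate into any of the quasi-convex subgroups yields one (such elements exist because the subgroups have infinite index and $\G$ is non-elementary hyperbolic); in (2), Bass--Serre theory guarantees a hyperbolic element as soon as the splitting is non-trivial, and for CAT(0) cube complexes without global fixed point a rank-one hyperbolic element exists by standard cubical geometry; in (3), the hypothesis that the action is small and (implicitly) non-trivial supplies a hyperbolic isometry of the $\R$-tree.

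The main obstacle, and where almost all the work lives, is the Gromov product inequality $(x,y)_{o,d} \le \lambda (x,y)_{o,d_S} + \lambda$. The common thread across the three cases is that the orbit map $\G \to X$ is coarsely Lipschitz with respect to $d_S$, which immediately gives $d(x,y) \le \lambda \, d_S(x,y) + \lambda$ and in particular the upper bounds on $d(o,x)$ and $d(o,y)$ in the Gromov product. What is less immediate is the lower bound on $d(x,y)$ in terms of $d_S(x,y)$ needed to close the inequality. For (1), I would use the fact that quasi-convexity of the $H_i$ together with Farb's relative hyperbolicity machinery controls how much a $\hat{d}$-geodesic can ``shortcut'' through a coset: shortcuts through a cone over $gH_i$ correspond in $\Cay(\G,S)$ to quasi-geodesic segments that fellow-travel $gH_i$, and the thin-triangles condition in $\Cay(\G,S)$ then converts these into the Gromov product comparison. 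For (2), the tree case is cleanest because $T$ is $0$-hyperbolic and Gromov products there are exact distances to medians; quasi-convexity of edge stabilizers ensures the median in $T$ is coarsely tracked by the median triple in $\Cay(\G,S)$, from which the bound follows. The CAT(0) cube complex generalisation is obtained via the Sageev-style hyperplane machinery, passing to the hyperbolic ``contact graph'' or using disk-diagram techniques to reduce to the tree case hyperplane by hyperplane. For (3), small actions on $\R$-trees of free and surface groups can be approximated by very small simplicial tree actions, reducing the problem to case (2) via a limiting argument (Culler--Vogtmann for $F_N$, Morgan--Shalen for surface groups).

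The hardest step is case (1): extracting the multiplicative-plus-additive Gromov product comparison from quasi-convexity requires the careful accounting of how coned-off geodesics sit inside Cayley-graph geodesics. I would organise this by first proving it for a single cone (one quasi-convex subgroup) where the geometry is transparent, then iterating to handle a finite family of cones. The CAT(0) cube complex sub-case of (2) is a secondary difficulty, since cube complexes are not hyperbolic in general and the target Gromov product must be computed against a different $d_0 \in \Dc_\G$ — specifically, a coned-off Cayley graph obtained by coning off the hyperplane stabilizers, reducing (2) to (1) and then invoking the Lipschitz comparison between cubical distance and this coned-off metric.
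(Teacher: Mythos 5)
The paper does not prove this statement: it is quoted verbatim as Theorem 1.6 of \cite{cantrell.reyes.man}, so there is no internal argument to compare yours against, and what follows assesses your proposal on its own terms. Your framing is sound: orbit pseudo-metrics are automatically left-invariant, the orbit map is coarsely Lipschitz, non-constancy of $\ell_d$ follows from exhibiting a loxodromic element (though in case (1) the correct criterion is that no \emph{power} of the element is conjugate into one of the coned-off subgroups), and, since all metrics in $\Dc_\G$ have uniformly comparable Gromov products by (\ref{eq.gp1}), one may as well take $d_0 = d_S$ throughout. You also correctly identify the crux: the inequality $(x,y)_{o,d}\le\lambda(x,y)_{o,d_S}+\lambda$ does not follow from Lipschitz bounds alone and amounts to a bounded-backtracking statement, namely that the orbit image of a point realising the word-metric Gromov product lies uniformly close to a $d$-geodesic (or median point) between the orbit points.

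Three steps, however, do not go through as written. First, in the cube-complex part of (2) you take $d_0$ to be the Cayley graph coned off along hyperplane stabilizers and call it an element of $\Dc_\G$: it is not, since coning off infinite-index quasi-convex subgroups destroys quasi-isometry with $d_S$ --- such metrics are precisely the examples in item (1) of $\overline{\Dc}_\G\setminus\Dc_\G$. One could instead chain two Gromov-product comparisons (cubical metric against the cone-off, cone-off against $d_S$ via case (1)), but your stated input for the first link, a ``Lipschitz comparison between cubical distance and this coned-off metric,'' is both false in general (two elements of a hyperplane stabilizer are at coned-off distance at most $2$ while their orbit points can be arbitrarily far apart in the cube complex) and in any case insufficient, since, as your own treatment of the other cases acknowledges, one-sided Lipschitz bounds never yield Gromov-product domination; this sub-case therefore needs a genuinely different argument. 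Second, in case (1) you invoke Farb's relative-hyperbolicity/coset-penetration machinery, which requires almost malnormality and is unavailable for general quasi-convex subgroups; the tool that is available, and suffices for the comparison you sketch, is that word-metric geodesics map to uniform unparametrized quasi-geodesics in the coned-off graph. Third, in case (3) the reduction by approximating a small $\R$-tree action by simplicial actions requires the constant $\lambda$ to be uniform along the approximating sequence and the orbit pseudo-metrics to converge in a sufficiently strong sense; neither point is addressed, so the limiting argument is a real gap. The standard way to close it is to prove the Gromov-product bound directly from bounded backtracking for such actions (Gaboriau--Jaeger--Levitt--Lustig for free groups, and Skora's duality with measured laminations for surface groups), with no limiting argument at all.
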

When we say `these actions give rise to points in $\overline{\Dc}_\G$' we mean that, when we lift the metric on the space to a metric on $\G$ (as we did for $d_\mathfrak{g}$ in Example \ref{ex.manifold}) this lifted metric belongs to $\overline{\Dc}_\G$.
We can prove the following rigidity result of $\overline{\Dc}_\G$.
\begin{theorem}\label{thm.general}
Let $\G$ be a non-elementary hyperbolic group. Equip $\G$ with a finite generating set $S$ and write $\nu_S$ for the corresponding Patterson-Sullivan measure (for the word metric $d_S$) on the Gromov boundary $\partial \G$ of $\G$. Then there exists $D >0$ such that for $\nu_S$ almost every $\xi \in \partial \G$ there exists a $D$-rough geodesic ray $(\xi_k)_{k=0}^\infty$ for $d_S$ starting at the identity in $\G$  with end point  $\xi$ such that
for any $M\ge 1$ the set $
\{[\xi_k]: k\ge M\}$ is spectrally rigid for $\overline{\Dc}_\G$.

Furthermore, for any function $f: \R_{>0} \to \R_{>0}$ with $f(T) \to \infty$ as $T\to\infty$ there exists a subsequence $n_k$ such that
$E= \{ [\xi_{n_k}] : k\ge 1\}$
is spectrally rigid for $\overline{\Dc}_\G$ and
\[
\#\{ \ell_S[x] < T : [x] \in E\} \le f(T)
\]
for all $T>0$.
\end{theorem}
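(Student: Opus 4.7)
The plan is to follow the three-step structure of the proof of Theorem \ref{thm.ae}, replacing $d_\mathfrak{g}$ by $d_S$ throughout and adapting the rigidity step to accommodate the broader class $\overline{\Dc}_\G$. The set $V \subset \partial \G$ is defined exactly as in Definition \ref{def.v}, using the fixed Cannon coding $\Sigma$ for $(\G, S)$, and I would reuse Definitions/Notations of Section \ref{sec.set} verbatim.

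The main technical content is the $\overline{\Dc}_\G$-analogue of Proposition \ref{prop.ssr}. Since we now work with $d_S$ directly, any $z = \ev_\ast^{-1}(\xi)$ already gives a $d_S$-geodesic ray, so no Morse Lemma conversion is required; after applying the perturbation of \cite[Lemma 3.11]{cantrell.reyes.man} we obtain elements $\{g_n\}$ with $(g_n, g_n^{-1})_{o,d_S} \le D_1$ and $|\ell_S[g_n] - |g_n|_S| \le R_1$. Given $d_1, d_2 \in \overline{\Dc}_\G$ agreeing on $\{[g_n]\}$, the argument of Proposition \ref{prop.ssr} carries through provided we have (i) an analogue of (\ref{eq.tlgp}) for metrics in $\overline{\Dc}_\G$, and (ii) control of the $d_j$-Gromov products from the $d_S$-Gromov products. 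For (ii), the defining inequality of $\overline{\Dc}_\G$ gives $(x,y)_{o,d_j} \le \lambda (x,y)_{o,d_S} + \lambda$ with a one-sided comparison; this is exactly what is needed, since we only require that small $d_S$-Gromov products yield bounded $d_j$-Gromov products. For (i), the needed translation-length-vs-displacement identity for $\overline{\Dc}_\G$ is available from \cite{cantrell.reyes.man} (and is how the rough similarity criterion is formulated there). With these in hand, the loop from Proposition \ref{prop.loops} produces, for each non-torsion conjugacy class $[x]$, elements $h_1, h_2$ on the coded ray with $d_j(o, h_2) = d_j(o, h_1) + d_j(o, \ev(\text{loop})) + O(1)$ for $j = 1, 2$, and the same cancellation gives $\ell_1[x^M] = \ell_2[x^M] + O(1)$, forcing rough similarity of $d_1, d_2$. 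The inductive sparsification argument for $E$ is word-for-word identical.

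For the measure-theoretic step, the situation is in fact simpler for a word metric than for $d_\mathfrak{g}$. Per Remark \ref{rem.busseman}, the construction of Section \ref{sec.ps} applies to any $d \in \Dc_\G$ with H\"older continuous Busemann cocycle, and the Busemann cocycle for $d_S$ is locally constant on cylinders of sufficient depth (an immediate consequence of hyperbolicity and the integer-valuedness of $d_S$), hence H\"older. One could alternatively invoke the Calegari--Fujiwara description (Proposition \ref{prop.mme}) directly identifying $\nu_S$ with the pushforward of the measure of maximal entropy. Either way, one obtains a measure $\widehat{\nu}$ on $\Sigma_{[\ast]}$ pushing forward to $\nu_S$ and absolutely continuous, on each word-maximal component, with respect to the Gibbs measure $\mu^i_{-v_S \P_S}$, yielding the analogues of Lemma \ref{lem.ac} and Corollary \ref{cor.measure}.

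To conclude, Poincar\'e recurrence applied to each shift-invariant Gibbs measure $\mu^j_{-v_S \P_S}$ gives $\mu^j_{-v_S \P_S}(\widehat{U}_j) = 1$, and Corollary \ref{cor.measure} (with $d_\mathfrak{g}$ replaced by $d_S$) then yields $\widehat{\nu}(U) = 1$, so $\nu_S(V) = \widehat{\nu}(\ev_\ast^{-1}(V)) \ge \widehat{\nu}(U) = 1$. The hardest part, in my view, is verifying that the proof of Proposition \ref{prop.ssr} genuinely survives the passage from $\Dc_\G$ to $\overline{\Dc}_\G$: the class $\overline{\Dc}_\G$ allows non-proper metrics and only one-sided Gromov product control, so one must confirm that every quantitative input used in the cancellation $\ell_j[h_2] - \ell_j[h_1]$ continues to hold with uniform error. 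Once this is checked (via the translation length estimates in \cite{cantrell.reyes.man}), the remainder of the argument is a direct transcription.
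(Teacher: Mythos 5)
Your proposal follows essentially the same route as the paper: the set $V$ of Definition \ref{def.v} is reused unchanged, the rigidity step is an adaptation of Proposition \ref{prop.ssr}, the sparsification argument is identical, and the measure step is handled via the Calegari--Fujiwara measure and Proposition \ref{prop.mme} together with the analogue of Corollary \ref{cor.measure} from \cite{georays} --- which is exactly what the paper does. The one point where you deviate is in how you justify the passage from $\Dc_\G$ to $\overline{\Dc}_\G$ in the rigidity step: you posit a direct analogue of (\ref{eq.tlgp}) for metrics in $\overline{\Dc}_\G$, cited loosely to \cite{cantrell.reyes.man}, together with the one-sided Gromov product bound. The paper instead pins this down through \cite[Proposition 5.1]{cantrell.reyes.man}: every $d \in \overline{\Dc}_\G \setminus \Dc_\G$ satisfies, up to a uniformly bounded error, $d = \textnormal{Dil}(d_1,d_2)\, d_2 - d_1$ for some $d_1, d_2 \in \Dc_\G$, and correspondingly $\ell_d = \textnormal{Dil}(d_1,d_2)\,\ell_{d_2} - \ell_{d_1}$. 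With this decomposition one only ever applies (\ref{eq.tlgp}) and the two-sided comparison (\ref{eq.gp1}) to the genuine metrics $d_1, d_2 \in \Dc_\G$, and the cancellation $\ell_j[g_{N_1}] = \ell_j[g_{N_2}] + \ell_j[x^M] + O(1)$ for $d \in \overline{\Dc}_\G$ is obtained by taking the corresponding linear combination; this is safer than asserting a displacement-versus-translation-length estimate directly for a possibly non-proper pseudo-metric for which only one-sided Gromov product control is available. Since you explicitly flag this as the point to verify and point at the correct reference, I regard it as a presentational rather than substantive difference, but you should carry out the reduction via the decomposition rather than leaving it as a citation. Finally, your alternative measure-theoretic route through a H\"older Busemann cocycle for $d_S$ is both unnecessary and shaky (Busemann cocycles of word metrics need not be continuous, which is precisely why the paper switches to the Calegari--Fujiwara construction for Theorem \ref{thm.general}); your second option, via Proposition \ref{prop.mme}, is the one to keep.
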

\begin{remark}
This result also holds if we replace the word metric $d_S$ with a Green metric (or in fact any \textit{strongly hyperbolic metric}, see Definition 2.1 in \cite{cantrell.new}) associated to an admissible, finitely supported, symmetric random walk. In this case the Patterson-Sullivan measure is the same as the hitting measure for the random walk on the boundary of the group. The same proof as that presented in the previous section can be applied since the construction in Section \ref{sec.ps} can be carried out as discussed in Remark \ref{rem.busseman}.
\end{remark}
The proof follows exactly the same lines used to prove Theorem \ref{thm.ae} and so we outline the changes needed instead of rewriting the whole proof. In fact, the set $V$ from Definition \ref{def.v} is precisely the same spectrally rigid set appearing in Theorem \ref{thm.general}.  There are two main differences in the argument  to show that $V$ satisfies the conditions in Theorem \ref{thm.general}.\\

\textbf{First change:} We need to prove an analogue of Proposition \ref{prop.ssr} for $d_S$ and for metrics in $\overline{\Dc}_\G$. To do the we use the fact that for each $d \in \overline{\Dc}_\G \backslash \Dc_\G$ there exist $d_1,d_2 \in \Dc_\G$ such that (up to a uniformly bounded error) $d$ can be written as 
\[
d(x,y) = \textnormal{Dil}(d_1,d_2)d_2(x,y) - d_1(x,y) \ \text{ for all $x,y \in \G$ where $\textnormal{Dil}(d_1,d_2) = \sup_{[x]\in\conj}\frac{\ell_{d_1}[x]}{\ell_{d_{d_2}}[x]}$}
\]
and the supremum is over all non-torsion conjugacy classes.
This is shown in \cite[Proposition 5.1]{cantrell.reyes.man}. In particular the translation length function for $d$ is a linear combination of those for $d_1,d_2$:
\[
\ell_d[x] = \textnormal{Dil}(d_1,d_2) \ell_{d_2}[x] - \ell_{d_1}[x]
\]
for all $x \in \G$. The key point here is that the Gromov product for $d \in \overline{\Dc}_\G \backslash \Dc_\G$ can be controlled by the Gromov product for the corresponding metrics $d_1,d_2 \in \Dc_\G$ (and hence by the Gromov product for $d_S$). Using this fact, the proof of Proposition \ref{prop.ssr} can be recreated for metrics in $\overline{\Dc}_\G$. We leave the details to the reader.\\

 \textbf{Second change:} We then need to show that $\nu_S(V) =1$. To do this we need to know that there  is a sufficiently nice measure $\widehat{\nu}_S$ on $\Sigma_{[\ast]}$ that pushes forward under $\ev_\ast$ to a Patterson-Sullivan measure (as in Proposition \ref{prop.qcm}) $\nu_S$ for $d_S$ on $\partial \G$, i.e. so that $\nu_S(U) = \widehat{\nu}_S(\ev_\ast^{-1}(U))$ for $U \subset \partial \G$. The measure $\widehat{\nu}$ can be constructed as in Section $4$ of \cite{CalegariFujiwara2010} (see also \cite{georays}). We will not provide the construction here but will instead present the properties of $\widehat{\nu}_S$ that we require for our proofs. Fix a Cannon coding $\Sigma$ for $\G$ and a finite generating set $S$. 

\begin{proposition}\label{prop.mme}
There exists a measure $\mu$ on $\Sigma$ such that
\[
 \lim_{n\to\infty}\frac{1}{n} \sum_{k=0}^n \sigma_\ast^k\widehat{\nu}_S =  \mu.
\]
Further there exist $0< \alpha_i <1$ for $i=1,...,m$ with $\sum_{i=1}^m \alpha_i =1$ such that
\begin{equation*}
\mu = \sum_{i=1}^m \alpha_i \mu_i,
\end{equation*}
where each $\mu_i$ is the measure of maximal entropy for the system $(\Sigma_{B_i},\sigma)$.
\end{proposition}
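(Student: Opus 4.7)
The plan is to combine the dynamical Patterson--Sullivan construction of Section~\ref{sec.ps} (applied to the word metric via Remark~\ref{rem.busseman}) with equidistribution on each word maximal component. First I would observe that, for the word metric $d_S$, every path in the Cannon coding is an exact $d_S$-geodesic, so the associated H\"older cocycle $\P_S$ satisfies $\P_S^n(x)=n+O(1)$. The Gibbs bounds (\ref{eq.cylinder1}) and (\ref{eq.cylinder}) then take the clean form $\widehat\nu_S[\ast,x_1,\dots,x_{k-1}]\asymp e^{-v_S k}$ whenever the cylinder can be extended into a word maximal component, and the equilibrium state $\mu^i_{-v_S\P_S}$ on $B_i$ coincides with the measure of maximal entropy $\mu_i$, since $v_S$ equals the topological entropy of $(\Sigma_{B_i},\sigma)$. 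The proofs of Lemmas~\ref{lem.nl} and~\ref{lem.ac} then carry over verbatim to show that $\widehat\nu_S$-almost every $x\in\Sigma_{[\ast]}$ has a first entry time $j(x)\ge 0$ into some word maximal component $B_{i(x)}$ after which the orbit stays there, and that the restriction of $\sigma^k_\ast\widehat\nu_S$ to $C_{i,j}=\{x:i(x)=i,\,j(x)=j\}$ is absolutely continuous with respect to $\mu_i$ for every $k\ge j$.

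I would then set $\alpha_i=\widehat\nu_S\{x:i(x)=i\}$. Each $\alpha_i$ is strictly positive because every $B_i$ is reachable from $\ast$, so the cylinders leading into $B_i$ carry positive $\widehat\nu_S$-mass, and $\sum_i\alpha_i=1$ by the previous step. Each system $(\Sigma_{B_i},\sigma,\mu_i)$ is ergodic (Gibbs measures on strongly connected subshifts are ergodic), so by the Birkhoff ergodic theorem any probability measure $\rho$ on $\Sigma_{B_i}$ absolutely continuous with respect to $\mu_i$ satisfies $\tfrac{1}{n}\sum_{k=0}^{n-1}\sigma^k_\ast\rho\to\mu_i$ weakly. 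Applying this to the normalised push-forwards $\sigma^j_\ast\widehat\nu_S|_{C_{i,j}}$ (which are supported in $\Sigma_{B_i}$ and are absolutely continuous with respect to $\mu_i$ by the previous paragraph) and summing over $j$, one obtains $\tfrac{1}{n}\sum_{k=0}^{n-1}\sigma^k_\ast\widehat\nu_S|_{\{i(x)=i\}}\to\alpha_i\mu_i$ weakly; summing in $i$ yields the proposition.

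The main obstacle I expect is justifying the interchange of the weak-$\ast$ Ces\`aro limit with the infinite sum over $j$. What is needed is a uniform tail bound $\widehat\nu_S\bigl(\bigcup_{j\ge N}C_{i,j}\bigr)\to 0$ as $N\to\infty$, which should follow by summing the Gibbs estimates over all initial paths of length $N$ in $\Sigma_{[\ast]}$ that avoid every word maximal component for their first $N$ steps and then invoking the exponential-decay portion of Lemma~\ref{lem.bounds}. Once this tail control is established, a standard dominated-convergence argument closes the proof.
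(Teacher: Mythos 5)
Your argument is essentially sound, but note that the paper does not actually prove Proposition \ref{prop.mme}: it is quoted from Section 4 of \cite{CalegariFujiwara2010} (see also \cite{georays}), where it is obtained by Perron--Frobenius/counting arguments for the transition matrix of the coding, comparing the measure with the Parry measures on the word maximal components. Your route instead recycles the paper's own Section \ref{sec.ps} machinery: for the word metric the potential can be taken constant, the analogues of (\ref{eq.cylinder1}) and (\ref{eq.cylinder}) read $\asymp e^{-v_{S}k}$ on cylinders that can be continued into a word maximal component, the equilibrium state of the constant potential on each $B_i$ is the measure of maximal entropy (word maximality being exactly $\textnormal{P}_{B_i}(-v_{S})=0$, which is what licenses the analogue of Lemma \ref{lem.rpf}), Lemmas \ref{lem.nl} and \ref{lem.ac} transfer, and then the first-entry decomposition $C_{i,j}$, absolute continuity of $\sigma^j_\ast\widehat{\nu}_S|_{C_{i,j}}$ with respect to $\mu_i$, ergodicity of $\mu_i$ and the Birkhoff theorem give the Ces\`aro convergence with $\alpha_i=\widehat{\nu}_S\{x: i(x)=i\}$. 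This is a genuinely different and more measure-theoretic derivation; it avoids redoing the counting estimates of \cite{CalegariFujiwara2010} and fits the framework already set up in the paper (cf.\ Remark \ref{rem.busseman}), at the cost of constructing your own $\widehat{\nu}_S$ by the Section \ref{sec.ps} recipe rather than the cited one --- harmless for the application, since any such weak-star limit pushes forward to a Patterson--Sullivan measure for $d_S$, but strictly one should either identify the two measures or read the proposition as a statement about your $\widehat{\nu}_S$.

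Two simplifications/clarifications. The tail bound you worry about is automatic: the sets $C_{i,j}$ are pairwise disjoint in $j$ and their union over $(i,j)$ has full $\widehat{\nu}_S$-measure by the analogue of Lemma \ref{lem.nl}, so $\widehat{\nu}_S\bigl(\bigcup_{j\ge N}C_{i,j}\bigr)\to 0$ by countable additivity; no appeal to Lemma \ref{lem.bounds} is needed, only the uniform mass bound on the Ces\`aro averages of each piece to justify the interchange. For $\alpha_i>0$, positivity of the mass of a cylinder $[\ast,x_1,\dots,x_{k-1},y_0]$ with $y_0\in B_i$ is not quite enough by itself; you should add that distinct word maximal components are not joined by any path, so $\widehat{\nu}_S$-almost every point of such a cylinder settles in $B_i$ and in no other maximal component, whence $\alpha_i\ge\widehat{\nu}_S[\ast,x_1,\dots,x_{k-1},y_0]>0$.
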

Here the measure of maximal entropy on each component is the Gibbs state for the constant potential $\P =1$ that appears in Proposition \ref{prop.vp}.
An important consequence of this is that if $U \subset \Sigma$ has $\mu_i(U\cap \Sigma_{B_i}) = 1$ for each $i=1,\ldots, k$ then the set
\[
\Sigma_{[\ast]} \cap \bigcup_{j=1}^\infty \sigma^{-j}(U)
\]
has full $\widehat{\nu}_S$ measure \cite[Proposition 4.6]{georays}. This is an analogue of Corollary \ref{cor.measure} for $\widehat{\nu}_S$ and following the same proof as that presented in Section \ref{sec.conclude}, we see that $\nu_S(V) = 1$. Combining this with the first change above concludes the proof for Theorem \ref{thm.general}.\\


As an immediate corollary of Theorem \ref{thm.general} we deduce a refinement of Kapovich's result \cite{kap} for Outer Space. We refer the reader to \cite{CV} for an introduction to Outer Space. Let $F_N$ be the free group on $N \ge 2$ generators. We let $\text{cv}_N$ denote Culler-Vogtmann Outer Space and we recall that points in Outer Space are determined by their length spectrum, i.e. if $T_1, T_2 \in \text{cv}_N$ have the same length spectra then $T_1 = T_2$. We say that a subset of conjugacy classes $E \subset \conj(F_N)$ is spectrally rigid for Outer Space if when $T_1, T_2 \in \text{cv}_N$ have the same length spectra on $E$ then $T_1 = T_2$.
\begin{theorem}\label{thm.os}
Let $F_N$ be the free group on $N \ge 2$ generators. Equip $F_N$ with a finite generating set $S$ and write $\nu_S$ for the corresponding Patterson-Sullivan measure (for the word metric $d_S$) on the Gromov boundary $\partial F_N$ of $F_N$. Then there exist $D > 0$ such that for $\nu_S$ almost every $\xi \in \partial \G$ there exists a $D$-rough geodesic ray $(\xi_k)_{k=0}^\infty$ for $d_S$ starting at the identity in $F_N$ with end point $\xi$ such that
for any $M\ge 1$ the set $
\{[\xi_k]: k\ge M\}$ is spectrally rigid for Outer Space.

Furthermore, for any function $f: \R_{>0} \to \R_{>0}$ with $f(T) \to \infty$ as $T\to\infty$ there exists a subsequence $n_k$ such that
$E= \{ [\xi_{n_k}] : k\ge 1\}$
is spectrally rigid for Outer Space and
\[
\#\{ [x] \in E : \ell_S[x] < T \} \le f(T)
\]
for all $T>0$.
\end{theorem}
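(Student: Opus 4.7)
The plan is to derive Theorem \ref{thm.os} as an essentially immediate corollary of Theorem \ref{thm.general}. The bridge between the two statements is Theorem \ref{thm.examps}(3): small actions of $F_N$ on $\R$-trees lift (after fixing a basepoint) to left-invariant pseudo-metrics in $\overline{\Dc}_{F_N}$. Every $T \in \textnormal{cv}_N$ is a free, minimal action of $F_N$ on an $\R$-tree, hence in particular a small action, so it gives rise to a metric $d_T \in \overline{\Dc}_{F_N}$ whose translation length function agrees with the length spectrum of $T$ on all of $\conj(F_N)$.

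With this dictionary in place, I would invoke Theorem \ref{thm.general} directly to produce the constant $D > 0$, the full-$\nu_S$-measure set of boundary points $\xi \in \partial F_N$, the $D$-rough geodesic rays $(\xi_k)_{k=0}^\infty$ starting at $o$ and ending at each such $\xi$, and the sparse subsequences $E = \{[\xi_{n_k}] : k \ge 1\}$ satisfying $\#\{[x] \in E : \ell_S[x] < T\} \le f(T)$. To upgrade the rigidity conclusion of Theorem \ref{thm.general} from $\overline{\Dc}_{F_N}$ to Outer Space, suppose $T_1, T_2 \in \textnormal{cv}_N$ have equal length spectra on one of these rigid sets. Then the associated metrics $d_{T_1}, d_{T_2} \in \overline{\Dc}_{F_N}$ have the same marked length spectrum on that set, so Theorem \ref{thm.general} gives that $d_{T_1}$ and $d_{T_2}$ are roughly similar. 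By \cite[Theorem 1.2]{cantrell.tanaka.1}, rough similarity in $\overline{\Dc}_{F_N}$ is equivalent to proportionality of the translation length functions, so there exists $\tau > 0$ with $\ell_{T_1}[x] = \tau \, \ell_{T_2}[x]$ for every $[x] \in \conj(F_N)$.

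Since $F_N$ is torsion-free and each $T_i$ is a free action on an $\R$-tree, one has $\ell_{T_i}[\xi_k] > 0$ for every $\xi_k \neq o$, and such $k$ exist in abundance because $d_S(o,\xi_k) \to \infty$ along the rough geodesic ray. The hypothesis $\ell_{T_1}[\xi_k] = \ell_{T_2}[\xi_k]$ on any single such class thus forces $\tau = 1$, so the length spectra coincide on all of $\conj(F_N)$. Since points of $\textnormal{cv}_N$ are determined by their length spectra, we conclude $T_1 = T_2$. The only conceptual step beyond bookkeeping is the passage from rough similarity to equality of length spectra, and this reduces to exhibiting a class in the rigid set with nonzero translation length in each $T_i$ — automatic in the present free, torsion-free setting.
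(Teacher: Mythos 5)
Your proposal is correct and follows the same route as the paper: the paper deduces Theorem \ref{thm.os} as an immediate corollary of Theorem \ref{thm.general}, using Theorem \ref{thm.examps}(3) to view points of $\textnormal{cv}_N$ as (small) actions giving metrics in $\overline{\Dc}_{F_N}$, together with the fact that points of Outer Space are determined by their length spectra. Your only addition is to spell out the normalization of the scaling constant $\tau$ (forced to be $1$ by agreement on a nontrivial class of positive translation length), a routine detail the paper leaves implicit.
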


\begin{remark}
In fact, by Theorem \ref{thm.examps}, this result generalises  to small actions of free groups and surface groups on $\mathbb{R}$-trees. That is, for any fixed word metric $d_S$ on $F_N$ (or a surface group) there are arbitrarily small length spectrum rigidity sets for isometric, small actions of $F_N$ (or the surface group) on $\mathbb{R}$-trees (that exist in $\nu_S$ almost every direction in $F_N$). This provides an interesting example of sparse length spectrum rigidity sets for non-proper actions. 
\end{remark}

We can also apply our methods to Hitchin representations. For an introduction to Hitchin representations we refer the reader to \cite{HitchNotes}. Suppose that $\G$ is a surface group (the fundamental group of a closed hyperbolic surface) and that $\rho: \G \to \PSL_{n_1}(\R)$ is a  Hitchin representations. For $A \in \PSL_k(\R)$  let $\lambda_1(A)$ denote the spectral radius of $A$. In this setting the map $[x] \mapsto \log\lambda_1(\rho(x))$ from $\conj(\G)$ to $\R$ plays the role of the marked length spectrum for the $\rho$. We say that a subset $E \subset \conj(\G)$ is a spectrally rigid for Hitchin representations if when two Hitchin representations $\rho_1, \rho_2$ have the same length spectra on $E$ then they have the same length spectra everywhere (or equivalently either $\rho_1 = \rho_2$ or $\rho_1$ is the contragredient of $\rho_2$ \cite{BCLS}). We then have the following.

\begin{theorem}\label{thm.hitchin}
Let $\G$ be a surface group. Equip $\G$ with a finite generating set $S$ and write $\nu_S$ for the corresponding Patterson-Sullivan measure (for the word metric $d_S$)  on the Gromov boundary $\partial \G$ of $\G$. Then there exists $D >0$ such that for $\nu_S$ almost every $\xi \in \partial \G$ there exists a $D$-rough geodesic ray $(\xi_k)_{k=0}^\infty$ for $d_S$ starting at the identity in $\G$ with end point $\xi$ 
such that
for any $M\ge 1$ the set $
\{[\xi_k]: k\ge M\}$ is spectrally rigid for Hitchin representations.

Furthermore, for any function $f: \R_{>0} \to \R_{>0}$ with $f(T) \to \infty$ as $T\to\infty$ there exists a subsequence $n_k$ such that
$E= \{ [\xi_{n_k}] : k\ge 1\}$
is spectrally rigid for Hitchin representations and
\[
\#\{ [x] \in E : \ell_S[x] < T \} \le f(T) 
\]
for all $T>0$.
\end{theorem}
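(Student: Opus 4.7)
The plan is to deduce Theorem~\ref{thm.hitchin} from Theorem~\ref{thm.general} by associating to each Hitchin representation a pseudo-metric in $\overline{\Dc}_\G$ whose translation length recovers the Hitchin length function $[x] \mapsto \log\lambda_1(\rho(x))$. The spectrally rigid candidate will be the same set $V \subset \partial \G$ from Definition~\ref{def.v}, and the full-measure assertion $\nu_S(V) = 1$ is identical to the one established for Theorem~\ref{thm.general}, since $\nu_S$ is the Patterson--Sullivan measure for the word metric $d_S$ in both settings.

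First I would associate to each Hitchin representation $\rho : \G \to \PSL_n(\R)$ the left-invariant pseudo-metric
\[
d_\rho(x,y) := \log \mu_1(\rho(x^{-1}y)), \qquad x,y \in \G,
\]
where $\mu_1(g)$ is the logarithm of the largest singular value of $g$. Because Hitchin representations are Anosov, $\rho$ provides a quasi-isometric embedding of $(\G, d_S)$ into the symmetric space of $\PSL_n(\R)$, and the Anosov property transfers enough of the coarse hyperbolic structure of $d_S$ to place $d_\rho$ in the class $\overline{\Dc}_\G$. A standard computation with the Jordan projection then gives
\[
\ell_{d_\rho}[x] = \lim_{n \to \infty} \frac{d_\rho(o, x^n)}{n} = \log \lambda_1(\rho(x)),
\]
so the translation length of $d_\rho$ recovers the Hitchin length spectrum exactly.

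With this dictionary in hand, the rigidity claim is a direct application of Theorem~\ref{thm.general}. If two Hitchin representations $\rho_1, \rho_2$ agree on $\{[\xi_k] : k \ge M\}$ (respectively on the sparse subsequence), then $d_{\rho_1}, d_{\rho_2} \in \overline{\Dc}_\G$ have identical translation lengths on that set. Theorem~\ref{thm.general} yields that $d_{\rho_1}$ and $d_{\rho_2}$ are roughly similar, i.e.\ $\tau\, \ell_{d_{\rho_1}} = \ell_{d_{\rho_2}}$ for some $\tau > 0$. Exact agreement on $\{[\xi_k]\}$ (whose translation lengths diverge, by an argument analogous to the remark following Theorem~\ref{thm.ae}) forces $\tau = 1$, so $\log \lambda_1(\rho_1(x)) = \log \lambda_1(\rho_2(x))$ for every $[x] \in \conj(\G)$. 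The BCLS rigidity theorem \cite{BCLS} then concludes that either $\rho_1 = \rho_2$ or $\rho_1$ is the contragredient of $\rho_2$, and the sparse version follows from an inductive construction identical to the \emph{furthermore} part of Proposition~\ref{prop.ssr}.

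The main obstacle is the first step: verifying that $d_\rho$ genuinely belongs to $\overline{\Dc}_\G$, and in particular establishing the coarse Gromov-product bound $(x,y)_{o, d_\rho} \le \lambda (x,y)_{o, d_S} + \lambda$. The symmetric space of $\PSL_n(\R)$ is not Gromov hyperbolic for $n \ge 3$, so the coarse hyperbolicity of $d_\rho$ must be inherited from the source $\G$ rather than from the target. This can be handled using Anosov-representation technology (for instance the exponential contraction of the Anosov flow and asymptotic estimates of Cartan projections along quasi-geodesics, as developed by Sambarino and by Bridgeman--Canary--Labourie--Sambarino); once this technical ingredient is in place, the remainder of the proof is essentially bookkeeping that tracks the reduction to Theorem~\ref{thm.general}.
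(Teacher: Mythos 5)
Your overall strategy---encode the Hitchin length spectrum as the translation length function of a coarsely controlled metric and then quote Theorem \ref{thm.general}, with the full-measure half being word-for-word the same---is close in spirit to what is needed, and your handling of the scaling constant (pinning $\tau=1$ using exact agreement on elements of $E$ with nonzero, divergent lengths) and the identity $\ell_{d_\rho}[x]=\log\lambda_1(\rho(x))$ are fine. The genuine gap is the pivotal membership claim $d_\rho\in\overline{\Dc}_\G$. As you define it, $d_\rho(x,y)=\log\sigma_1(\rho(x^{-1}y))$ is not symmetric once the target is $\PSL_n(\R)$ with $n\ge 3$: since $\sigma_1(g^{-1})=\sigma_n(g)^{-1}$, the defect $d_\rho(x,y)-d_\rho(y,x)=\log\bigl(\sigma_1\sigma_n\bigr)(\rho(x^{-1}y))$ is a nontrivial linear functional of the Cartan projection and is unbounded for general Hitchin representations (it vanishes, e.g., on the Fuchsian locus, but not in general). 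So $d_\rho$ is not a pseudo-metric, is not within bounded distance of one, and does not lie in $\overline{\Dc}_\G$ as the paper defines it (recall that elements of $\overline{\Dc}_\G\setminus\Dc_\G$ are, up to bounded error, of the form $\textnormal{Dil}(d_1,d_2)d_2-d_1$ with $d_1,d_2\in\Dc_\G$, hence symmetric); Theorem \ref{thm.general} therefore cannot be invoked as a black box. The natural repair, symmetrizing $d_\rho$, changes the translation length from $\log\lambda_1(\rho(x))$ to $\frac{1}{2}\log\bigl(\lambda_1(\rho(x))/\lambda_n(\rho(x))\bigr)$; and because the rigid set $\{[\xi_k]\}$ is not closed under inversion, equality of the $\lambda_1$-spectra of $\rho_1,\rho_2$ on that set does not yield equality of the symmetrized spectra there, so the reduction does not close.

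By contrast, the paper's (omitted) argument does not pass through $\overline{\Dc}_\G$ at all: it reruns the proof of Theorem \ref{thm.ae} with the same set $V$ of Definition \ref{def.v}, the word-metric measure theory already used for Theorem \ref{thm.general}, and, in Proposition \ref{prop.ssr}, with $d_j(o,\cdot)$ replaced by $\log\sigma_1(\rho_j(\cdot))$ and $\ell_j$ by $\log\lambda_1(\rho_j(\cdot))$, using the Anosov analogues of (\ref{eq.gp1}) and (\ref{eq.tlgp}): coarse additivity of $\log\sigma_1\circ\rho$ along word geodesics, and $\log\sigma_1\approx\log\lambda_1$ for elements $g$ with $(g,g^{-1})_{o,d_S}$ bounded. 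These are exactly the estimates you defer to ``Anosov-representation technology,'' and deferring them would be acceptable in a sketch; the point is that they make no use of symmetry, which is why the direct route avoids the obstruction your reduction runs into. To salvage your write-up you should either carry out this direct rerun, or work with an asymmetric framework and prove the needed analogue of Theorem \ref{thm.general} for it---as it stands, the step ``$d_\rho\in\overline{\Dc}_\G$'' is not just unproved but false as stated.
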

The proof again follows exactly the same argument used to prove Theorem \ref{thm.ae} and so we omit it.

\subsection*{Open access statement}
For the purpose of open access, the author has applied a Creative Commons Attribution (CC BY) licence to any Author Accepted Manuscript version arising from this submission.

\bibliographystyle{alpha}
\bibliography{SparceLSRPS}

\end{document}